\newcommand{\N}{\mathbb{N}}
\newcommand{\R}{\mathbb{R}}
\newcommand{\C}{\mathbb{C}}
\newcommand{\E}{\mathbb{E}}
\newcommand{\Tr}{\text{\rm Tr}}
\newtheorem{info}{}
\newtheorem{theorem}[info]{Theorem}
\newtheorem{corr}[info]{Corollary}
\newtheorem{lem}[info]{Lemma}
\newtheorem{prop}[info]{Proposition}
\newtheorem{conjecture}[info]{Conjecture}
\numberwithin{info}{section}
\numberwithin{equation}{section}
\renewcommand{\[}{\begin{equation}}
	\renewcommand{\]}{\end{equation}}
\g@addto@macro\normalsize{%
	\setlength\abovedisplayskip{5pt}
	\setlength\belowdisplayskip{5pt}
	\setlength\abovedisplayshortskip{4pt}
	\setlength\belowdisplayshortskip{4pt}}
\newcommand{\lam}{\lambda}
\renewcommand{\P}{\mathbb{P}}
\renewcommand{\cal}{\mathcal}
\newcommand{\sq}{\sqrt}
\newcommand{\sign}{\mathrm{Sign}}
\renewcommand{\sq}{\sqrt}
\newcommand{\Sum}{\mathrm{\Sum}}
\renewcommand{\Im}{\mathrm{Im}}
\renewcommand{\Re}{\mathrm{Re}}
\renewcommand{\Im}{\mathrm{Im}}
\renewcommand{\Re}{\mathrm{Re}}
\newcommand{\erfc}{\mathrm{erfc}}
\newcommand{\pf}{\mathrm{Pf}}
\renewcommand{\l}{\left}
\renewcommand{\r}{\right}
\newcommand{\Sb}{\bar{S}}
\begin{document}
	\title{Moments of Characteristic Polynomials of Non-Symmetric Random Matrices}
	
	\author{
		Pax Kivimae \thanks{Courant Institute of Mathematical Sciences, New York University. Email: pax.kivimae@cims.nyu.edu. This research was partially supported by NSF grant DMS-2202720.}}
	
	\maketitle

	\begin{abstract}
		We study the moments of the absolute characteristic polynomial of the real elliptic ensemble, including the case of the real Ginibre ensemble. We obtain asymptotics for all integral moments inside the real bulk to order $1+o(1)$. In particular, for the real Ginibre ensemble, this extends known computations for even moments, and confirms a recent conjecture of Serebryakov and Simm \cite{nickrecent} in the integral case. For the elliptic case, this generalizes computations of first two moments by Fyodorov \cite{complexity-fyodorov-non-gradient} and Fyodorov and Tarnowski \cite{fyo-condition-elliptic}. We additionally find uniform asymptotics for the multi-point correlations of the absolute characteristic polynomial. Our proof relies on a relation between expectations for the absolute characteristic polynomial and the real correlation functions, as well as an algebraic method of obtaining asymptotics for the behavior of these correlation functions near the diagonal.
	\end{abstract}
	
	\maketitle
	
	\section{Introduction}
	
	The characteristic polynomials of random matrices have long played the role of both an important and difficult to understand object. Their properties have seen intense study by researchers in a number of different areas in both mathematics and physics due to the sheer number of applications and connections that have been found. Among these are those in number theory \cite{numbertheory-review}, quantum chaos \cite{qc1}, statistical mechanics and disordered systems \cite{tribe1,mp2,yan1}, and statistics and machine learning \cite{ml-complexity-1,replica-tensor-PCA,fyo-stat-2}, not to mention the applications to study the random matrices themselves \cite{fyo-condition-real,fyo-condition-elliptic, BrezinBegin,BrezinReal}.
	
	Of particular complexity are the higher moments of the absolute (or non-absolute) characteristic polynomial. However, despite extensive study, many results are only known for ensembles with eigenvalues on the real line or unit circle. Moreover, most of the more general results have primarily been restricted to the case of complex random matrices. 
	
	The purpose of this paper is to evaluate the higher integral moments of the absolute characteristic polynomial for a family of non-symmetric real matrix ensembles on the real bulk. Specifically, we focus on the family of real elliptic ensembles \cite{sommers-elliptic}. This family includes the real Ginibre ensemble, where our results extend the recent computation of Serebryakov and Simm \cite{nickrecent} for even moments, and confirms their conjecture in the case of odd moments. In this general elliptic case, our computations are the first above the first two moments (as detailed below), and reveal a simple connection between the asymptotics in the general elliptic case and the special case of the real Ginibre ensemble, suggesting some degree of universality.
	
	We now mention two motivations for studying the elliptic ensemble, outside of the fact that it contains the fundamental case of the real Ginibre ensemble. The first is that outside of the special case of the real Ginibre ensemble, the real elliptic ensemble lacks bi-orthogonal symmetry, which removes most of the number of tools available for their study. Despite this, they are expected to retain most properties of the real Ginibre ensemble, making them a useful test case when approaching universality for questions involving non-symmetric real random matrices. The second is that the ensemble has appeared quite frequently in applications. In particular, in situations where non-gradient models or partially symmetric interactions occur, elliptic random matrices have often played a role analogous to that of the Gaussian orthogonal ensemble in the gradient case, with the asymmetry parameter tuned to the model in question. Classical examples appear in the study of neural networks \cite{original-glassy,original-1,ml-complexity-1} and ecology \cite{may,fyo-maywigner-1,fyo-maywigner-2}. 
	
	\subsection{Result}
	
	To state our result, we recall the definition of the real elliptic ensemble, which depends on a symmetry parameter $\tau\in (-1,1)$. This is given as distribution over $N$-by-$N$ real matrices with (Lebesgue) density given by
	\[\P_N(A)=\frac{1}{Z_{N,\tau}}\exp\left(\frac{-N}{2(1-\tau^2)}\left(\Tr(AA^t)-\tau \Tr(A^2)\right)\right),\]
	where $Z_{N,\tau}$ is a normalization constant. We will denote this ensemble as $GEE(\tau,N)$, and denote a matrix sampled from it as $A_{N,\tau}$.
	
	The limiting law for the empirical measure for the elliptic ensemble was identified by Girko \cite{girko1} (see as well \cite{naumov}) who found it converged to the uniform measure on the ellipse $\cal{E}_\tau\subseteq \C$ given by
	\[\cal{E}_\tau=\left\{x+iy\in \C:\frac{x^2}{(1+\tau)^2}+\frac{y^2}{(1-\tau)^2}\le 1\right\}.\label{eqn:ellipse}\]
	In the case of $\tau=0$, this recovers the circular law for the Ginibre ensemble. If one sends $\tau \to 1$, this converges to semicircle law on the real line, formally recovering the result for the Gaussian orthogonal ensemble. The intersection of the bulk of the spectrum with the real line is thus $(-(1+\tau),1+\tau)$. Our first main result is the following asymptotics.
	
	\begin{theorem}
		\label{theorem: moments of real bulk}
		Let us fix $\tau\in [0,1)$, $\mu\in (-(1+\tau),1+\tau)$, and $\ell \in \N$. Then we have that
		\[
		\E|\det(A_{N,\tau}-\mu I)|^{\ell}=e^{\frac{N \ell}{2} \left(\frac{\mu^2}{1+\tau}-1\right)}N^{\ell(\ell-1)/4}C_{\tau}(\ell)\l(1+O\l(N^{-1}\r)\r).
		\]
		where here $C_{\tau}(\ell)$ is the constant
		\[C_{\tau}(\ell)=\left(\frac{(1+\tau)^{\ell/2}}{(1-\tau^2)^{\ell(\ell+1)/4}}\right)\left(\frac{(2\pi)^{\ell/2}}{2^{\ell(\ell-1)/4}\prod_{j=1}^{\ell}\Gamma(j/2)}\right).\]
		\noindent
		Moreover, this estimate is uniform in $\mu \in (-(1+\tau)+\epsilon,1+\tau-\epsilon)$ for any fixed $\epsilon>0$.
	\end{theorem}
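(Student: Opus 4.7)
My plan is to reduce the moment computation to the near-diagonal asymptotic analysis of a real correlation function of a slightly enlarged real elliptic matrix, exploiting the Pfaffian structure of its real eigenvalue process.

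The first step is to establish a moment–correlation identity of the general shape
\[
\E\prod_{j=1}^{\ell}|\det(A_{N,\tau}-\mu_j I)| \;=\; C_{N,\ell,\tau}\,\prod_{j=1}^{\ell} w_\tau(\mu_j)\,\frac{R_\ell^{(N+\ell)}(\mu_1,\dots,\mu_\ell)}{\prod_{i<j}|\mu_i-\mu_j|},
\]
where $R_\ell^{(N+\ell)}$ is the $\ell$-point correlation function of the real eigenvalues of $A_{N+\ell,\tau}$, $w_\tau(\mu)=\exp(-N\mu^2/(2(1+\tau)))$ is the Gaussian weight, and $C_{N,\ell,\tau}$ collects combinatorial constants and the partition-function ratio $Z_{N+\ell,\tau}/Z_{N,\tau}$. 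This identity follows from the explicit joint real/complex eigenvalue density for the real elliptic ensemble: inserting $\prod_j |\lambda-\mu_j|$ into the eigenvalue weight is equivalent to promoting $\ell$ extra real eigenvalues pinned at $\mu_1,\dots,\mu_\ell$, with the Vandermonde factor on the right coming from the eigenvalue interaction in the enlarged joint density. Taking $\mu_j\to\mu$ gives $\E|\det(A_{N,\tau}-\mu I)|^{\ell}$ on the left and, on the right, the ``Vandermonde-reduced'' correlation function at the diagonal, which remains finite. The product $w_\tau(\mu)^{\ell}$ together with the partition-function ratio produces precisely the exponential factor $\exp(\tfrac{N\ell}{2}(\mu^2/(1+\tau)-1))$.

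The second step is to compute $\lim_{\mu_j\to\mu} R_\ell^{(N+\ell)}(\mu_1,\dots,\mu_\ell)/\prod_{i<j}|\mu_i-\mu_j|$. The real eigenvalues of $A_{N+\ell,\tau}$ form a Pfaffian point process with an explicit $2\times 2$ matrix kernel $K_{N+\ell}$ built from (skew-orthogonal) elliptic Hermite polynomials. The Pfaffian of the $2\ell\times 2\ell$ block $[K_{N+\ell}(\mu_i,\mu_j)]$ vanishes on the diagonal to precisely the order of $\prod_{i<j}(\mu_i-\mu_j)$, so dividing out the Vandermonde converts the limit into a Pfaffian of derivatives $\partial_x^i\partial_y^j K_{N+\ell}(\mu,\mu)$ for $0\le i,j\le\ell-1$. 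Bulk asymptotics of the elliptic Hermite kernel (uniform on compact subsets of $(-(1+\tau),1+\tau)$) show that each such derivative scales like $\sqrt{N}$, delivering the overall prefactor $N^{\ell(\ell-1)/4}$. The limiting Pfaffian can then be evaluated in closed form via a de Bruijn / Hermite-moment identity, which simultaneously produces the combinatorial constant $(2\pi)^{\ell/2}/(2^{\ell(\ell-1)/4}\prod_{j}\Gamma(j/2))$ and the $\tau$-dependent prefactor $(1+\tau)^{\ell/2}/(1-\tau^2)^{\ell(\ell+1)/4}$.

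The hardest step is the algebraic identification of the near-diagonal Pfaffian for general $\tau\in(0,1)$. For $\tau=0$ the kernel admits a bi-orthogonal representation that makes the Pfaffian-of-derivatives tractable essentially by hand, recovering the recent results of Serebryakov--Simm for even $\ell$. For $\tau>0$, bi-orthogonality is lost and one must work directly with skew-orthogonal elliptic Hermite polynomials, whose diagonal derivatives are not manifestly clean. The clean factorization of $C_\tau(\ell)$ into a $\tau$-dependent piece and a universal combinatorial piece strongly suggests the existence of a rescaling/conjugation identity reducing the elliptic Pfaffian of derivatives to the Ginibre one; identifying and justifying this identity to the order required (rather than merely extracting its leading behavior through $\tau$-dependent saddle-point bounds) is the conceptual crux. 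The uniform $O(N^{-1})$ error and uniformity in $\mu\in(-(1+\tau)+\epsilon,1+\tau-\epsilon)$ should then follow from standard uniform estimates on the elliptic Hermite kernel in the bulk.
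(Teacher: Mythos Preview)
Your overall architecture is correct and matches the paper: the moment--correlation identity (the paper's Lemma~2.1), the Pfaffian structure of the real correlation function, replacement of the finite-$N$ kernel by its bulk limit with exponentially small error, and extraction of the $N^{\ell(\ell-1)/4}$ from the homogeneity of the Vandermonde differential are all exactly as in the paper. Two points, however, deserve correction.

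First, you have the difficulty inverted. The ``rescaling/conjugation identity reducing the elliptic Pfaffian of derivatives to the Ginibre one'' that you flag as the conceptual crux is in fact trivial once one has passed to the bulk-limiting kernel. The limiting kernel $\cal{K}_\tau$ is an explicit Gaussian in $x-y$, and the paper simply observes the exact identity
\[
\begin{bmatrix}1 & 0\\ 0 & \sqrt{1-\tau^2}\end{bmatrix}\cal{K}_\tau\bigl(x\sqrt{1-\tau^2}\bigr)\begin{bmatrix}1 & 0\\ 0 & \sqrt{1-\tau^2}\end{bmatrix}=\cal{K}_0(x),
\]
which immediately pulls out the factor $(1+\tau)^{\ell/2}/(1-\tau^2)^{\ell(\ell+1)/4}$ from the Pfaffian. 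No skew-orthogonal asymptotics are needed for this step; the $\tau$-dependence is entirely absorbed at the level of the \emph{limiting} kernel, not the finite-$N$ one.

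Second, and this is the real gap: your plan to ``evaluate the limiting Pfaffian in closed form via a de Bruijn / Hermite-moment identity'' is precisely what the paper could \emph{not} do for odd $\ell$. The authors write that they ``suspect that this identity has a direct proof'' but ``have only been able to directly verify this result when $\ell$ is even''. Since odd $\ell$ is the main new content of the theorem, your proposal as stated does not close. The paper's workaround is to note that the reduction above makes the remaining constant $\cal{C}_\ell$ independent of both $\mu$ and $\tau$, and then to identify it by specializing to $\tau=0$, $\mu=0$, where $\E|\det(A_{N,0})|^\ell$ has an exact Selberg-integral expression whose asymptotics are elementary via Stirling. This comparison, not a direct Pfaffian evaluation, is what pins down the constant $(2\pi)^{\ell/2}/\bigl(2^{\ell(\ell-1)/4}\prod_{j=1}^\ell\Gamma(j/2)\bigr)$.

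One further technical point you glossed over: the kernel entry $I_N(x,y)$ contains $\tfrac{1}{2}\sign(y-x)$, so $K_N$ is not smooth across the diagonal and your ``Pfaffian of derivatives $\partial_x^i\partial_y^j K_{N+\ell}(\mu,\mu)$'' is not a priori well-defined. The paper handles this by restricting to the ordered chamber $\mu_1>\cdots>\mu_\ell$, where each $\sign$ is constant, replacing it by that constant, and thereby obtaining a genuine holomorphic extension to which the Vandermonde-division lemma applies.
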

	
	Based on this result, we conjecture some asymptotic formulas for the fractional moments. This extends the recent conjecture of Serebryakov and Simm \cite{nickrecent} on the moments of the real Ginibre ensemble to the general elliptic case.
	
	\begin{conjecture}
		\label{conjecture: moment conjecture}
		Fix $\tau\in (-1,1)$. For any $\ell\in (-1,\infty)$ and $\mu \in (-(1+\tau),1+\tau)$, we have that
		\[
		\E|\det(A_{N,\tau}-\mu I)|^{\ell}=e^{\frac{N \ell}{2} \left(\frac{\mu^2}{1+\tau}-1\right)}N^{\ell(\ell-1)/4}C_{\tau}(\ell)(1+o(1)),
		\]
		where here
		\[C_{\tau}(\ell)=\left(\frac{(1+\tau)^{\ell/2}}{(1-\tau^2)^{\ell(\ell+1)/4}}\right)\left(\frac{(2\pi)^{\ell/2}G\l(\frac{1}{2}\r)}{2^{\ell(\ell-1)/4}G\l(\frac{\ell}{2}+1\r)G\l(\frac{\ell}{2}+\frac{1}{2}\r)}\right),\]
		where here $G$ denotes the Barnes $G$-function.
	\end{conjecture}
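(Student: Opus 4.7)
The plan is to extend Theorem~\ref{theorem: moments of real bulk} from integer $\ell$ to all $\ell \in (-1,\infty)$ by combining the integer-case asymptotics with a local universality statement for the real eigenvalues and a separation of the absolute determinant into a near-$\mu$ piece and a far-from-$\mu$ piece. Note that the finite product $\prod_{j=1}^{\ell}\Gamma(j/2)$ appearing in the integer case equals $G(\tfrac{1}{2})/(G(\tfrac{\ell}{2}+1)G(\tfrac{\ell}{2}+\tfrac{1}{2}))$ via the Barnes functional equation, so the conjectural $C_\tau(\ell)$ is precisely the natural analytic continuation, and what remains is to realize it probabilistically.

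\emph{Step 1 (Separation).} Write
\[
|\det(A_{N,\tau}-\mu I)|^\ell = \prod_{i}|\lam_i^{\R}-\mu|^\ell \cdot \prod_{j}|\lam_j^{\C}-\mu|^{2\ell},
\]
and fix a mesoscopic window $I_N=(\mu-R N^{-1/2},\mu+R N^{-1/2})$ with $R=R_N\to\infty$ slowly. Eigenvalues outside $I_N$ contribute a smooth linear statistic $\sum_{\lam\notin I_N}\log|\lam-\mu|^\ell$ which should concentrate; its leading-order mean $\tfrac{N\ell}{2}(\mu^2/(1+\tau)-1)$ is accessible via the Girko/Naumov law together with a linear-statistics CLT for $GEE(\tau,N)$, and is exactly the prefactor in Conjecture~\ref{conjecture: moment conjecture}.

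\emph{Step 2 (Near piece via local universality).} After rescaling by $\sqrt{N}$, the real eigenvalues in $I_N$, together with the complex conjugate pairs having imaginary part of order $N^{-1/2}$, should converge jointly to a universal Pfaffian point process---a rescaled version of the real Ginibre bulk-on-axis process, with density set by $\rho_\tau(\mu)$. Granting this universality, the expectation of $\prod|\lam-\mu|^\ell$ against the limiting process reduces to a Selberg/Laguerre-type integral whose explicit evaluation produces precisely the Barnes $G$-function factor in $C_\tau(\ell)$ for any real $\ell>-1$. The identification of the limiting local process could in principle be bootstrapped from the algebraic near-diagonal analysis of real correlation functions that, according to the abstract, underlies the proof of Theorem~\ref{theorem: moments of real bulk}.

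\emph{Step 3 (Moment closure and main obstacle).} Upgrading weak convergence to convergence of $\E|\det|^\ell$ requires uniform-in-$N$ tail bounds. The upper tail can be handled by Gaussian concentration applied to the log-determinant, using Theorem~\ref{theorem: moments of real bulk} to seed moment bounds at integer $\ell$ and then interpolating by H\"older. The delicate direction, and the principal obstacle, is the lower tail for $\ell\in(-1,0)$: one must rule out that $A_{N,\tau}$ admits an eigenvalue abnormally close to $\mu$ on scales much smaller than $N^{-1/2}$. This is made harder by the fact that, away from $\tau=0$, the elliptic ensemble lacks the Pfaffian structure that would make local correlations directly computable. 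A promising route is a Paley--Zygmund-type fractional-moment reversal, extracting anti-concentration for $|\det(A_{N,\tau}-\mu I)|$ from the sharp second-moment and fourth-moment asymptotics already provided by Theorem~\ref{theorem: moments of real bulk}, and then iterating to squeeze out uniform integrability of $|\det|^\ell$ at all $\ell>-1$.
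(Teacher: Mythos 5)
The statement is a conjecture, and the paper does not prove it; Appendix \ref{section:appendix:conjecture} only supplies evidence. The paper's route is: (i) observe that the $\tau$- and $\mu$-dependence of $C_\tau(\ell)$ is simple and already verified for all integers by Theorem \ref{theorem: moments of real bulk}, so it is plausible that only the case $\tau=0$, $\mu=0$ needs independent confirmation; (ii) in that single case use the \emph{exact} finite-$N$ identity $\E|\det A_{N,0}|^\ell = N^{-N\ell/2}2^{N\ell/2}\prod_{i=1}^N \Gamma((\ell+i)/2)/\Gamma(i/2)$, valid for all real $\ell>-1$ because it comes from a ratio of Selberg integrals, and extract the Barnes $G$ asymptotics from the $G$-function recurrence. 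Your proposal takes a genuinely different route (mesoscopic near/far decomposition, linear-statistics CLT for the far piece, local universality plus an explicit local-process computation for the near piece), and your opening remark identifying $\prod_{j=1}^\ell\Gamma(j/2)$ with the Barnes $G$ ratio is correct and matches what the appendix verifies.

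However, your Step 2 contains the essential gap: you assert that the $\ell$-th absolute moment of $\prod|\lambda-\mu|$ against the limiting bulk-on-axis Pfaffian process ``reduces to a Selberg/Laguerre-type integral whose explicit evaluation produces precisely the Barnes $G$-function factor.'' No such evaluation is known for fractional $\ell$ for the real-axis local process; producing that constant is exactly the content of the conjecture (it is why even the $\tau=0$, $\mu\neq 0$ case of Serebryakov--Simm remains open). The paper's exact Selberg formula at $\mu=0$ exploits the global rotational invariance of the Ginibre ensemble, not a local computation, and does not transfer to $\mu\neq 0$ or $\tau\neq 0$. So your argument assumes the conclusion at its central step. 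A secondary factual error: you claim that away from $\tau=0$ the elliptic ensemble ``lacks the Pfaffian structure that would make local correlations directly computable.'' It does not --- Lemma \ref{lem:exact form of pfaffian kernel} gives the Pfaffian kernel for all $\tau\in(0,1)$, following Forrester--Nagao and Sinclair; what the elliptic ensemble lacks is bi-orthogonal (rotational) symmetry, which is what blocks the Schur-function/duality and Selberg-integral methods, not the Pfaffian correlation structure.
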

	
	We sketch in Appendix \ref{section:appendix:conjecture} how this is obtained from Theorem \ref{theorem: moments of real bulk} above, and in particular show why $C_{\tau}(\ell)$ coincides with the above definition given in Theorem \ref{theorem: moments of real bulk} for natural numbers.
	
	We note as well that our result yields a simple relationship between the moments of the general elliptic ensemble and the special case of the Ginibre ensemble. In particular we have the asymptotic
	\[\frac{\E|\det(A_{N,\tau}-\mu I)|^{\ell}}{\E|\det(A_{N,0}-(1+\tau)^{1/2}\mu I)|^{\ell}}=\left(\frac{(1+\tau)^{\ell/2}}{(1-\tau^2)^{\ell(\ell+1)/4}}\right)(1+o(1)),\]
	which is the same scaling on $\mu$ which relates the unit disk $\cal{E}_0$ to the ellipse $\cal{E}_\tau$. This is similar (and as we see below, related) to relations found in the asymptotics of the real correlation functions after re-scaling.
	
	We also compare our results to that of \cite{afanasievinterpolate,afanasievComplex}, which imply that for $\ell$ even and $z\in \C\setminus\R$ with $|z|<1$ we have that
	\[\E |\det(A_{N}-\mu I)|^{\ell}=N^{\ell^2/8}e^{N\frac{\ell}{2}\left(|z|^2-1\right)}\left(\frac{(2\pi)^{\gamma/4}}{G\left(1+\frac{\gamma}{2}\right)}\right),\]
	when $A_N$ is sampled from either the real or complex Ginibre ensemble. In particular, the exponent in $N$ changes from $\ell^2/8$ to $\ell(\ell-1)/4$ as you cross from $\C\setminus \R$ to $\R$ inside of the bulk, which is interesting, given the close relation of this factor to finer descriptions of the characteristic polynomial using log-correlated fields (see \cite{fyo-keating}).
	
	Next, we give a uniform result for the multi-point correlation functions for the absolute characteristic polynomial.
	
	\begin{theorem}
		\label{theorem: real bulk multi point}
		Let us fix $\tau\in [0,1)$, $\ell\in \N$, and $\epsilon>0$. Then there is small $c>0$ for such that for distinct $\mu_1,\cdots,\mu_{\ell}\in (-(1+\tau)+\epsilon,1+\tau-\epsilon)$ we the uniform bound
		\[
		\E\left[\prod_{i=1}^{\ell}\left|\det(A_{N,\tau}-\mu_i I)\right|\right]=e^{\frac{N }{2} \sum_{i=1}^{\ell}\left(\frac{\mu_i^2}{1+\tau}-1\right)}N^{\ell(\ell-1)/4}\left(\frac{(4\pi)^{\ell/2}(1+\tau)^{\ell/2}}{(1-\tau^2)^{\ell(\ell+1)/4}}\right)\times\]
		\[\left(\frac{\pf\l(\l[\cal{H}\l(\frac{\sqrt{N}\left(\mu_i-\mu_j\right)}{\sqrt{1-\tau^2}}\r)\r]_{1\le i,j\le \ell}\r)}{\Delta\left(\frac{\sqrt{N}\mu_1}{\sqrt{1-\tau^2}},\cdots,\frac{\sqrt{N}\mu_\ell}{\sqrt{1-\tau^2}}\right)}\l(1+O(N^{-1})\r)+O(e^{-Nc})\right),
		\]
		where here $\Delta$ denotes the Vandermonde determinant and $\cal{H}:\R \to M_2(\R)$ is given by  
		\[\cal{H}(x):=\frac{1}{\sqrt{2\pi}}\begin{bmatrix}
			\sqrt{\left(\frac{\pi}{2}\right)}\sign(x)-\int_0^x e^{-\frac{t^2}{2}}dt&  e^{-\frac{x^2}{2}}\\
			- e^{-\frac{x^2}{2}}& -x e^{-\frac{x^2}{2}}
		\end{bmatrix}.\label{eqn:def:H-kernel}\]
	\end{theorem}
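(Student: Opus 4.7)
The starting point is an exact identity expressing $\E\bigl[\prod_{i=1}^{\ell}|\det(A_{N,\tau}-\mu_i I)|\bigr]$ as a Pfaffian correlation function of real eigenvalues of a related ensemble (this should be the identity the paper builds up to before proving Theorem \ref{theorem: moments of real bulk}). Concretely, I would use that the real spectrum of a $GEE(\tau,N)$ matrix is a Pfaffian point process with a $2\times 2$ matrix kernel $K_N$ built from (skew-orthogonal) Hermite polynomials associated with the elliptic weight, and that absolute moments of the characteristic polynomial are expressible via a finite-rank perturbation of that kernel. This reduces the problem to computing, uniformly in the $\mu_i$, the Pfaffian $\pf([K_N(\mu_i,\mu_j)]_{i,j})$ after the appropriate bulk rescaling.

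Next I would carry out the asymptotic analysis of the kernel $K_N(\mu_i,\mu_j)$ for $\mu_i,\mu_j$ in the real bulk $(-(1+\tau)+\e,1+\tau-\e)$. Using the integral representation of the Hermite polynomials in the elliptic weight and a saddle-point/steepest-descent analysis at a real bulk point $\mu$, one obtains Gaussian fluctuations on scale $\sqrt{(1-\tau^2)/N}$, which is the reason the rescaled variable $\sqrt{N}(\mu_i-\mu_j)/\sqrt{1-\tau^2}$ appears in $\mathcal{H}$. The entries of $\mathcal{H}$ in \eqref{eqn:def:H-kernel} arise directly as the scaling limits of the entries of $K_N$ (an error-function/Gaussian antisymmetric piece for the top-left, Gaussian kernels for the other entries). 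The exponential factor $\exp\bigl(\tfrac{N}{2}\sum_i(\mu_i^2/(1+\tau)-1)\bigr)$ comes from the leading exponential behavior of the Hermite basis at the saddle, and the overall constant $(4\pi)^{\ell/2}(1+\tau)^{\ell/2}/(1-\tau^2)^{\ell(\ell+1)/4}$ is produced by the Gaussian prefactor combined with the Jacobian of the rescaling of the real axis. The $N^{-1}$ correction follows from carrying the steepest-descent expansion to next order, and the additive $O(e^{-Nc})$ term controls contributions from configurations where some $\mu_i$ wanders near or past the edge of the rescaled saddle neighborhood.

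The main obstacle, and the reason the statement is written as a ratio of a Pfaffian and a Vandermonde, is obtaining uniformity when two or more $\mu_i$ come close together (on any scale, including scales smaller than $N^{-1/2}$). Both $\pf\bigl([\mathcal{H}(\sqrt{N}(\mu_i-\mu_j)/\sqrt{1-\tau^2})]_{i,j}\bigr)$ and $\Delta(\sqrt{N}\mu_1/\sqrt{1-\tau^2},\dots)$ degenerate along the diagonals $\mu_i=\mu_j$, but their ratio extends continuously. To handle this I would implement the algebraic reduction flagged in the abstract: factor the Pfaffian by Taylor-expanding the kernel entries $\mathcal{H}(\sqrt{N}(\mu_i-\mu_j)/\sqrt{1-\tau^2})$ around coincidences, exhibit a factorization $\pf(\cdot) = \Delta(\cdot)\cdot R_N(\mu_1,\dots,\mu_\ell)$ with $R_N$ a Pfaffian of derivatives that is regular and uniformly bounded, and verify that the same algebraic identity holds for $K_N$ at the finite-$N$ level (so that one can pass to the limit in $R_N$ rather than in the singular ratio). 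Consistency with the coincident limit forced by Theorem \ref{theorem: moments of real bulk} should then be used as a sanity check: taking $\mu_i\to\mu$ and expanding the Pfaffian as a Wronskian-type determinant in the $\mathcal{H}$-entries should recover the product $\prod_{j=1}^{\ell}\Gamma(j/2)$ that converts the constant in the multi-point formula into the constant $C_\tau(\ell)$ of Theorem \ref{theorem: moments of real bulk}.

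Finally, I would assemble the uniform bound by combining the kernel asymptotics, the algebraic factorization near the diagonal, and uniform Gaussian tail estimates to produce the claimed multiplicative $(1+O(N^{-1}))$ error inside the bulk together with the additive exponentially small error $O(e^{-Nc})$ from the non-bulk contributions to the Pfaffian expansion.
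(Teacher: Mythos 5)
Your outline follows essentially the same route as the paper: an exact identity turning the product of absolute characteristic polynomials into a real $\ell$-point correlation function of the $(N+\ell)$-dimensional ensemble divided by a Vandermonde (Lemma \ref{lem:tuca-relation}), the Pfaffian representation of that correlation function, and a uniform bulk replacement of the finite-$N$ kernel $K_N$ by the rescaled limit $\mathcal{H}$. Two points of divergence are worth flagging. First, you misattribute the Vandermonde denominator: it is not introduced to regularize a degenerating Pfaffian, but falls out of the exact identity itself (the extra $\ell$ points contribute $|\Delta(\mu_1,\dots,\mu_\ell)|$ to the joint eigenvalue density of the enlarged matrix), and since the $\mu_i$ in this theorem are distinct, the statement is literally the exact ratio with the kernel swapped out --- no factorization $\pf=\Delta\cdot R_N$ is needed here; the paper reserves that algebraic machinery (Lemma \ref{lem:abstract-nonsense complex analysis}) for the coincident-point Theorem \ref{theorem: moments of real bulk}. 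What \emph{is} needed for uniformity of the additive $O(e^{-Nc})$ error when points cluster is control of derivatives of the kernel difference, which the paper gets from Corollary \ref{corr:error estimate for pfaffian}; your factorization idea would accomplish the same thing, at the cost of more work. Second, on the analytic side the paper does not run a steepest-descent analysis on an integral representation: it writes $S_N$ explicitly via skew-orthogonal (Hermite-type) polynomials, resums with Mehler's formula, and bounds the tail and the boundary term with Cram\'er and Plancherel--Rotach estimates to get the exponential rate; your saddle-point route is plausible but would need to be carried out to produce the uniform $O(e^{-Nc})$ bound you assert. Finally, your proposed ``sanity check'' recovering $\prod_j\Gamma(j/2)$ by direct expansion of the coincident Pfaffian is something the paper explicitly could not do for odd $\ell$ --- the constant is instead pinned down by comparison with the Selberg-integral formula for $\E|\det A_N|^{\ell}$ --- but that concerns the other theorem, not this one.
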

	
	It is interesting to compare this to the local multi-point correlation functions in the case of $\tau=0$ for even $\ell$ given in \cite{nickrecent}. Here is is shown that if $\ell$ is even, then for $\mu\in (-(1+\tau),1+\tau)$, and distinct $\zeta_1,\cdots,\zeta_\ell\in \C$, one has that
	\[
	\E\left[\prod_{i=1}^{\ell}\l|\det(A_{N,0}-\left(\mu+\frac{\zeta_i}{\sqrt{N}}\right)I)\r|\right]=
	\]
	\[
	e^{\frac{N \ell}{2} \left(\mu^2-1\right)+\sqrt{N}\mu \sum_{i=1}^{\ell} \zeta_i}(2\pi)^{\ell/4}\frac{\pf([(\zeta_j-\zeta_i)e^{\zeta_i\zeta_j}]_{1\le i,j\le \ell})}{\Delta(\zeta_1,\cdots,\zeta_\ell)}\l(1+o(1)\r).\label{eqn:tribe:kernel:complex}
	\]
	It would be interesting to know how these expressions relate, or even how one could obtain (\ref{eqn:tribe:kernel:complex}) directly from (\ref{eqn:def:H-kernel}).
	
	\subsection{Related Results}
	
	We now comment on a number of related results. For the real elliptic ensemble, asymptotics for the first moment of the absolute characteristic polynomial on the real bulk were computed by Fyodorov \cite{complexity-fyodorov-non-gradient}, and follow from its relation (see \cite{fyo-maywigner-1}) to the mean-density function, which was first computed by Forrester and Nagao \cite{forrester-elliptic}\footnote{Strictly speaking, they only establish the case of even $N$, with the extension to $N$ odd case following from methods in \cite{forrester-odd,sinclair}.}. Asymptotics for the second moment were derived by Fyodorov and Tarnowski \cite{fyo-condition-elliptic} while studying the condition numbers of such matrices. We mention as well the earlier work of Akemann, Phillips, and Sommers \cite{akemann1} who found an exact representation for the expectation of the product of the (non-absolute) characteristic polynomial at two distinct points, though they did not consider its asymptotics. 
	
	In the case of the real Ginibre ensemble, the asymptotics for the first moment in the bulk were obtained by Edelman, Kostlan and Shub \cite{edelman1}, who used it to compute the mean-density function, with similar results for the second moment given in the follow up work of Edelman \cite{edelman2}. 
	
	As mentioned above, results for all even moments in the real Ginibre case are also known as well. In particular, Afanasiev computed the asymptotics the even moments of a matrix of real i.i.d entries (a generalization of the real Ginibre ensemble) in the real bulk in \cite{afanasievreal} up to an unknown constant term, building upon a similar result of theirs in the complex case \cite{afanasievComplex}. Building upon the work of Tribe and Zaboronski \cite{tribe1,tribe3}, this constant was then computed by Serebryakov and Simm \cite{nickrecent}.
	
	To the knowledge of the author, our results are the first asymptotics for the odd moments of the absolute characteristic polynomial for any real non-symmetric matrix above the first, with one notable exception: The absolute determinant of the real Ginibre ensemble itself, for which the asymptotics of all fractional moments may be computed by using a classical relation between it and a ratio of Selberg integrals (see \cite{nickrecent} or (\ref{eqn:determinant of ginibre ratio}) below).
	
	We now comment on some other results with relation to this case. Sommers and Khoruzhenko \cite{khoruzhenko-matrix-averages} were able to obtain explicit formulas for the expectation of Schur functions of the eigenvalues of a real Ginibre matrix at fixed $N$. Forrester and Rains \cite{forrester-matrix-averages} expanded upon this work, and used it to give an explicit duality formula between integer moments of the characteristic polynomials and a finite-dimensional integral, to which one may easily apply Laplace's method. However, due to the nature of the method it cannot be generalized to absolute moments, yielding results only for the even moments. 
	
	Such duality formulas had previously been found in the case of unitary-invariant matrix ensembles by Fyodorov and Khoruzhenko \cite{uni-hermite-fyodorov}, who studied even powers of the absolute characteristic polynomial, again using Schur functions. The generality of this result is quite beautiful, though at the same time it helps to explain the difficulty that the lack of bi-orthogonal symmetry poses for analysis of the elliptic ensemble outside of the special case of the real Ginibre ensemble.
	
	Finally, for the complex Ginibre ensemble, significantly more is known. In particular, Webb and Wong \cite{webbwong} have computed asymptotics for all fractional moments inside of the bulk. This result however, relies on Riemann-Hilbert methods which are exclusive to the complex case. Moreover, a number of duality formulas in the case for integral moments of the non-absolute characteristic polynomials in elliptic ensembles are given by Akemann and Vernizzi in \cite{moments-and-duality-ginibre-elliptic}.
	
	Lastly, we highlight an application of this work as presented in our companion paper \cite{kivimae}. In this work, we study the number of zeros for a certain random vector field on the sphere, known as the asymmetric spherical $p$-spin glass model \cite{original-glassy,complexity-fyodorov-non-gradient}. Such counts are the focus of the theory of landscape complexity, and their relation to absolute determinants of random matrices comes from the Kac-Rice formula (see the work of Fyodorov \cite{fyo1} for an introduction to this area). In particular, our main result, Theorem \ref{theorem: moments of real bulk} is applied in \cite{kivimae} to show the total number of zeros concentrates on its average. This result proves to be a crucial step to provide the first rigorous example of a system with a transition from relative to absolute instability, as introduced by Ben Arous, Fyodorov and Khoruzhenko \cite{fyo-maywigner-1,fyo-maywigner-2}. 
	
	\subsection{Method}
	
	Our approach will rely on relations between the absolute characteristic polynomial and the (eigenvalue) correlation functions for the ensemble. Such relations have proven to be a crucial tool in the study of the Hermitian and symmetric ensembles, leading back to the fundamental works of Br\'{e}zin, Fyodorov, Hikami, Mehta, Normand, and Strahov \cite{BrezinBegin,KeyRelationPhys,FyoKeyRelation} (see as well \cite{holgersym,holgercom}). These relations rely on the invariant nature of the ensemble, and in-particular, on the occurrence of the Vandermonde determinant in the eigenvalue probability distribution function. The explicit result is given in Lemma \ref{lem:tuca-relation} below. 
	
	Asymptotics for the real correlation function are known \cite{forrester-elliptic,forrester-odd,sinclair}, and may be obtained through employing a Pfaffian representation of the correlation functions for finite $N$. In particular, after employing these asymptotics one could obtain a local version of Theorem \ref{theorem: real bulk multi point} pointwise.
	
	The real work is to obtain Theorem \ref{theorem: moments of real bulk}. One essentially wants to take all the points in Theorem \ref{theorem: real bulk multi point} to coincide. As is clear the division by the Vandermonde determinant on the right-hand side of Theorem \ref{theorem: real bulk multi point}, computing this limit is not immediately clear.
	
	In the case of even moments of the Ginibre ensemble (see \cite{nickrecent,tribe1,tribe3,afanasievreal}), the key is that one may write the kernel on the right-hand side of (\ref{eqn:tribe:kernel:complex}) as a certain expectation with respect to the $CSE(\ell)$, which is uniformly defined over all $(\zeta_1,\cdots,\zeta_\ell)$, and so has a clear limit if $\zeta_1=\cdots =\zeta_\ell=0$, which may then be computed using classical integral formulas.
	
	In our case however, we are not aware of such an integral representation, and so instead employ a more algebraic approach. In particular, we show that the limit as $\zeta_1,\cdots ,\zeta_\ell\to 0$ coincides with the same limit taken on a certain sum of partial derivatives of the correlation function. This follows from a general result on anti-symmetric holomorphic functions, which we modify to employ our  correlation functions. In particular, this not only gives a uniform expression for our limiting expression, but also reduces us to establishing asymptotics for the correlation functions in a suitable Sobolev norm, which can be reduced to obtaining similar asymptotics for the Pfaffian kernel.
	
	We note that we also believe our approach could immediately be generalized to compute the even moments of the absolute characteristic polynomial in the complex bulk, though as more approaches for the even case are known, we have not pursued this.
	
	Finally, we outline briefly the structure of the article. In Section \ref{section:correlation functions and characteristic polynomials}, we recall some basic facts about the ensemble, and provide our relation between the absolute characteristic polynomial and the real correlation function. In Section \ref{section:proof of main theorem}, we recall the Pfaffian kernel representation of the real correlation functions, which we use to prove our main results, up to a result on the asymptotics for the Pfaffian kernel. This result is shown in Section \ref{section:pfaffian kernel}, where we also give the exact formula for the Pfaffian kernel. Finally, in Appendix \ref{section:appendix:conjecture}, we provide the aforementioned statements on Conjecture \ref{conjecture: moment conjecture}.

	\section{Characteristic Polynomials and Correlations \label{section:correlation functions and characteristic polynomials}}
	
	To begin, we first recall the structure of the eigenvalue density for the real elliptic ensemble. Its complexity is somewhat surprising given the naturality of the model, and doubly so since the expressions for the density in the complex and quaternionic counterparts for the Ginibre ensemble (i.e. $\tau=0$) were already found in the original paper of Ginibre \cite{ginibre-12}. Indeed, it was only over twenty years later that an expression was derived by Lehmann and Sommers \cite{sommers-elliptic}, who also computed  the density of the elliptic ensemble as well.
	
	This difficulty may be partially explained by the fact that this ensemble lacks a continuous eigenvalue density function on $\C$. Indeed, for each $K\in \N$ such that $N-K$ is non-negative and even, the subset of $N$-by-$N$ matrices such that $A_{N}$ has precisely $K$ real eigenvalues has non-zero Lebesgue measure. We note the evenness of $N-K$ is clear, as eigenvalues of $A_N$ are defined as the solutions to the equation $\det(A_N-\lam)=0$, which has only real coefficients, so that the non-real eigenvalues must also come in conjugate pairs. In particular, when $A_N$ is sampled from $GEE(\tau,N)$, this $K$ is a random variable. However, when one conditions on the value of $K$, the eigenvalue density has an elegant expression.
	
	To give this, let us denote the event that $A_N$ has exactly $k$ real eigenvalues as $\cal{E}_{N,k}$. Conditional on $\cal{E}_{N,k}$, let us denote the $k$ real eigenvalues of $A_N$, as $\lam_1,\dots \lam_k$. Defining $m=(N-k)/2$, we denote the $m$-eigenvalues which live in the upper half-plane as $z_1,\dots z_{m}\in \mathbb{H}$, so that in total the eigenvalues of $A_N$ are given by $\lam_1,\dots,\lam_k,z_1,\dots z_m, \bar{z}_1,\dots \bar{z}_m$. 
	
	Then the (symmetrized) conditional density of $(\lam_1,\dots,\lam_k,z_1,\dots z_m)$ on $\R^k\times \mathbb{H}^{m}$ is given by (see (2.7) of \cite{forrester-elliptic})
	\footnote{Note in the notation of \cite{forrester-elliptic} we have $b=N(1+\tau)^{-1}$.}
	\[P_{N,k,m}(\{\lam_i\}_{i=1}^k,\{z_i\}_{i=1}^{m}):=\frac{1}{K_N(\tau)}|\Delta(\{\lam_i\}_{i=1}^k\cup\{z_i\}_{i=1}^m\cup \{\bar{z}_i\}_{i=1}^m)|\times\]\[\frac{1}{k!m!}\prod_{i=1}^ke^{-\frac{N}{2(1+\tau)}\lam_i^2}\prod_{i=1}^{m}2e^{-\frac{N}{(1+\tau)}\Re(z^2)}\erfc \left(\Im(z_i)\sq{\frac{2N}{1-\tau^2}} \right),\]
	and $K_N(\tau)$ is the normalization constant
	\[K_N(\tau):=N^{-N(N+1)/4}(1+\tau)^{N/2}2^{N(N+1)/4}\prod_{i=1}^{N}\Gamma(i/2).\]
	From this we define the real $\ell$-point correlation function $\rho^{\ell}_N:\R^\ell\to \R$ by
	\[\rho^{\ell}_N(w_1,\dots, w_\ell):=\frac{1}{K_N(\tau)}\sum_{L+2M=N,\; L\ge \ell,M\ge 0}\frac{L!}{(L-\ell)!}\times \]\[\int_{\R^{L-\ell}\times \mathbb{H}^{M}}P_{N,L,M}(\{w_i\}_{i=1}^{\ell}\cup\{\lam_i\}_{i=1}^{L-\ell}, \{z_i\}_{i=1}^{M})\prod_{i=1}^{L-\ell}d\lam_i\prod_{i=1}^{M}d^2z_i.\label{eqn:correlation density expression}\]
	We note that while the variable $M=(N-L)/2$ here is redundant, its inclusion makes the expressions below significantly more manageable. We also note for the clarity of the reader, this coincides with the alternative standard definition of a correlation function (see (5.1) of \cite{borodin-elliptic} or (4.2) of \cite{forrester-elliptic}) by Corollary 2 of \cite{borodin-elliptic}.
	
	We now demonstrate a relationship between the multi-point correlations of the absolute characteristic polynomial and the values of the real correlation function for a larger matrix.
	
	\begin{lem}
		\label{lem:tuca-relation}
		For any $\ell\ge 1$, $N>\ell$, and distinct $\mu_1,\cdots,\mu_{\ell}\in \R$, we have that
		\[\E\left[\prod_{i=1}^{\ell}|\det(A_N-\mu_iI)|\right]=D_{N,\ell}(\tau)\frac{e^{\frac{N}{2(1+\tau)}\sum_{i=1}^{\ell}\mu_i^2}}{|\Delta(\mu_1,\dots \mu_\ell)|}\rho_{N+\ell}^{\ell}(\tilde{\mu}_1,\dots,\tilde{\mu}_\ell),\label{eqn:lem:tuca-relation}\]
		where $\tilde{\mu}_i:=\mu_i\sq{N/(N+\ell)}$ and 
		\[D_{N,\ell}(\tau):=\frac{K_{N+\ell}(\tau)}{K_N(\tau)}\left(\frac{N+\ell}{N}\right)^{N/2+(N+\ell)(N+\ell-1)/4}.\]
	\end{lem}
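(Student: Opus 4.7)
The plan is a direct unpacking of the definition of $\rho^\ell_{N+\ell}(\ti\mu_1,\dots,\ti\mu_\ell)$, coupled with two bookkeeping moves: a Vandermonde factorization isolating the characteristic-polynomial factor, and a rescaling of the remaining integration variables that converts the $(N+\ell)$-scale density into an $N$-scale one. The structural reason this works is that fixing $\ell$ real eigenvalues in a matrix of size $N+\ell$ leaves a configuration of $L-\ell$ real and $M$ complex pairs with $(L-\ell)+2M=N$ points, which after the rescaling should be governed by a $GEE(\tau,N)$ density.

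First, for each summand in the definition of $\rho^\ell_{N+\ell}(\ti\mu)$, I would write
\[
|\Delta(\{\ti\mu_i\}\cup E)| = |\Delta(\ti\mu)|\cdot\prod_{i=1}^{\ell}|\det(A'-\ti\mu_iI)|\cdot|\Delta(E)|,
\]
where $E=\{\lam_j\}_{j=1}^{L-\ell}\cup\{z_k,\bar{z}_k\}_{k=1}^M$ and $A'$ is any $N\times N$ matrix with spectrum $E$; the middle factor arises from the cross differences, using $\ti\mu_i\in\R$ so that $(\ti\mu_i-z_k)(\ti\mu_i-\bar z_k)=|\ti\mu_i-z_k|^2$. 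The choice $\ti\mu_i=\mu_i\sq{N/(N+\ell)}$ is precisely tailored so that the Gaussian weights on the fixed points satisfy $e^{-(N+\ell)\ti\mu_i^2/(2(1+\tau))}=e^{-N\mu_i^2/(2(1+\tau))}$, which produces the factor $e^{N\sum\mu_i^2/(2(1+\tau))}$ moved to the left of (\ref{eqn:lem:tuca-relation}), and yields $|\Delta(\ti\mu)|=(N/(N+\ell))^{\ell(\ell-1)/4}|\Delta(\mu)|$.

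Next I would rescale the remaining variables by $\lam_j=\sq{N/(N+\ell)}\lam_j'$ and $z_k=\sq{N/(N+\ell)}z_k'$. A direct check confirms each piece of the $(N+\ell)$-density becomes the corresponding $N$-density: the Gaussian factors $e^{-(N+\ell)\lam_j^2/(2(1+\tau))}$ become $e^{-N(\lam_j')^2/(2(1+\tau))}$, the factors $e^{-(N+\ell)\Re(z_k^2)/(1+\tau)}$ become $e^{-N\Re((z_k')^2)/(1+\tau)}$, and crucially the $\erfc$ arguments $\Im(z_k)\sq{2(N+\ell)/(1-\tau^2)}$ transform into $\Im(z_k')\sq{2N/(1-\tau^2)}$, because $\sq{N/(N+\ell)}$ exactly cancels the change in $N$. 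The Jacobian contributes $(N/(N+\ell))^{N/2}$, the Vandermonde $|\Delta(E)|$ contributes $(N/(N+\ell))^{N(N-1)/4}$, and $\prod_i|\det(A'-\ti\mu_iI)|$ pulls back as $(N/(N+\ell))^{N\ell/2}\prod_i|\det(A''-\mu_iI)|$ for the rescaled matrix $A''$. Combined with the $(N/(N+\ell))^{\ell(\ell-1)/4}$ from $|\Delta(\ti\mu)|$, these aggregate to the exponent $N/2+(N+\ell)(N+\ell-1)/4$ appearing in $D_{N,\ell}(\tau)$, by the elementary identity $(N+\ell)(N+\ell-1)/4=N(N-1)/4+N\ell/2+\ell(\ell-1)/4$.

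Finally, after this change of variables the integrand (for each $(L,M)$) is precisely the $GEE(\tau,N)$ density on the event of $K:=L-\ell$ real eigenvalues times $\prod_i|\det(A_N-\mu_iI)|$, once the combinatorial prefactor $L!/(L-\ell)!$ is balanced against the $1/(L!M!)$ in $P_{N+\ell,L,M}$ and the $(L-\ell)!M!$ in $P_{N,L-\ell,M}$. Summing over $(L,M)$ with $L+2M=N+\ell$ is the same as summing over $(K,M)$ with $K+2M=N$, and since the events $\cal{E}_{N,K}$ partition the probability space the sum collapses to $\E\l[\prod_i|\det(A_N-\mu_iI)|\r]$. The remaining prefactor of $K_N(\tau)/K_{N+\ell}(\tau)$ times the accumulated power of $(N+\ell)/N$ assembles to exactly $D_{N,\ell}(\tau)^{-1}$, yielding (\ref{eqn:lem:tuca-relation}). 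The argument is essentially bookkeeping, and the main obstacle—if any—is keeping the scaling exponents, the combinatorial constants, and the conventions around $K_N(\tau)$ and the $1/(k!m!)$ in $P_{N,k,m}$ all consistent; no analytic input is required.
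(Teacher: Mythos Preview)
Your proposal is correct and follows essentially the same approach as the paper: both use the Vandermonde factorization to split off the characteristic-polynomial factors, then rescale all remaining integration variables by $\sqrt{N/(N+\ell)}$ to convert between the $N$- and $(N+\ell)$-scale densities, with the combinatorial and scaling exponents assembling into $D_{N,\ell}(\tau)$. The only cosmetic difference is direction---the paper starts from the expectation and builds up to $\rho^{\ell}_{N+\ell}$, whereas you start from $\rho^{\ell}_{N+\ell}$ and unwind to the expectation---but the manipulations are identical.
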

	\begin{proof}
		We note that for any $u_1,\dots u_i,v_1,\dots v_j\in \C$ we have that
		\[|\Delta(u_1,\dots, u_i,v_1,\dots, v_j)|=|\Delta(u_1,\dots, u_i)\Delta(v_1,\dots, v_j)|\prod_{k=1}^i\prod_{l=1}^{j}\l|u_k-v_l\r|.\]
		Applying this relation to (\ref{eqn:correlation density expression}) we obtain that
		\[\E\left[\prod_{i=1}^{\ell}|\det(A_N-\mu_i I)|\right]e^{-\frac{N}{2(1+\tau)}\sum_{i=1}^{\ell}\mu_i^2}|\Delta(\mu_1,\dots \mu_\ell)|=\]
		\[\sum_{L,M\ge 0, L+2M=N}\frac{1}{L!M!}\frac{1}{K_N(\tau)}\int_{\R^L\times \mathbb{H}^M}|\Delta(\{\mu_i\}_{i=1}^{\ell}\cup\{\lam_i\}_{i=1}^{L}\cup\{z_i\}_{i=1}^{M}\cup\{\bar{z}_i\}_{i=1}^{M})|\times\]
		\[\prod_{i=1}^{\ell}e^{-\frac{N}{2(1+\tau)}\mu_i^2}\prod_{i=1}^{L}e^{-\frac{N}{2(1+\tau)}\sum_{i=1}^{L}\lam_i^2}\prod_{i=1}^{M}2e^{-\frac{N}{(1+\tau)}\Re(z^2)}\erfc \left(\Im(z_i)\sq{\frac{2N}{1-\tau^2}} \right)\prod_{i=1}^{L}d\lam_i\prod_{i=1}^{M}d^2 z_i.\label{eqn:ignore-655}\]
		Rescaling all integration variables by a factor of $\sq{N/(N+\ell)}$, we see that the right hand side of (\ref{eqn:ignore-655}) may be rewritten as
		\[D_{N,\ell}(\tau)\sum_{L,M\ge 0, L+2M=N}\frac{(L+\ell)!}{L!}\times\]
		\[\int_{\R^L\times \mathbb{H}^M}P_{N+\ell,L+\ell,M}(\{\tilde{\mu}_i\}_{i=1}^{\ell}\cup\{\lam_i\}_{i=1}^{L}\cup\{z_i\}_{i=1}^{M}\cup\{\bar{z}_i\}_{i=1}^{M})\prod_{i=1}^{L}d\lam_i\prod_{i=1}^{M}d^2 z_i.\]
		Re-indexing, we see this coincides with $D_{N,\ell}(\tau)\rho^{\ell}_{N+\ell}(\tilde{\mu}_1,\dots \tilde{\mu}_k)$, which completes the proof.
	\end{proof}
	
	Finally, we will complete this with the asymptotics of $D_{N,\ell}(\tau)$ for large $N$.
	
	\begin{lem}
		\label{lem:limit for D}
		For any $\ell\in \N$ we have that
		\[D_{N,\ell}(\tau)=\left(1+O(N^{-1})\right)N^{-\frac{\ell}{2}}e^{-N\ell/2}\left(4\pi (1+\tau)\right)^{\ell/2}.\]
	\end{lem}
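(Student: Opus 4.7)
The plan is a direct Stirling-based expansion of the explicit formula for $D_{N,\ell}(\tau)$. Substituting the definition of $K_N(\tau)$ into $K_{N+\ell}(\tau)/K_N(\tau)$ and multiplying by $((N+\ell)/N)^{N/2+(N+\ell)(N+\ell-1)/4}$, one sees that $D_{N,\ell}(\tau)$ splits as the product of four pieces: (i) the $N$-independent factor $(1+\tau)^{\ell/2}$; (ii) the exact power of two $2^{[(N+\ell)(N+\ell+1) - N(N+1)]/4} = 2^{N\ell/2 + \ell(\ell+1)/4}$; (iii) an algebraic factor formed from the powers of $N$, $N+\ell$, and the correction $((N+\ell)/N)^{N/2+(N+\ell)(N+\ell-1)/4}$; and (iv) the Gamma product $\prod_{i=N+1}^{N+\ell}\Gamma(i/2)$.

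For piece (iii), I would take logarithms and write $\log(N+\ell) = \log N + \log(1+\ell/N)$. The coefficient of $\log N$ collapses to $-\ell N/2 - \ell(\ell+1)/4$, while the coefficient of $\log(1+\ell/N)$ computes via a telescoping cancellation to exactly $-\ell/2$, so this term contributes only $O(N^{-1})$ after Taylor expansion. Hence piece (iii) equals $N^{-\ell N/2 - \ell(\ell+1)/4}(1 + O(N^{-1}))$.

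For piece (iv), I would apply Stirling's formula in the form $\log\Gamma(x) = (x - \tfrac{1}{2})\log x - x + \tfrac{1}{2}\log(2\pi) + O(1/x)$ at $x = i/2$ and sum over the $\ell$ values $i = N+1,\ldots, N+\ell$, expanding $\log(i/2) = \log(N/2) + \log(1 + (i-N)/N)$ to first order in $1/N$. The contributions from $(i/2 - 1/2)\log(i/2)$ and $-i/2$ reassemble into $\bigl(\tfrac{\ell N}{2} + \tfrac{\ell(\ell-1)}{4}\bigr)(\log N - \log 2) - \tfrac{\ell N}{2} + \tfrac{\ell}{2}\log(2\pi) + O(N^{-1})$.

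Adding the logarithms of the four pieces, the $\log N$ contributions combine to $-\tfrac{\ell}{2}\log N$, the $\log 2$ contributions to $\tfrac{\ell}{2}\log 2$, the bare $N$ terms to $-\ell N/2$, and the remaining constants to $\tfrac{\ell}{2}\log[(2\pi)(1+\tau)]$. Exponentiating yields $N^{-\ell/2}e^{-\ell N/2}(4\pi(1+\tau))^{\ell/2}(1+O(N^{-1}))$, as required. The only real obstacle is bookkeeping: several intermediate contributions scale like $N\log N$ or $N$ and must cancel exactly between pieces (iii) and (iv), and one needs Stirling to its $O(1/x)$ accuracy and the $\log(1+\ell/N)$ expansions to order $1/N^2$, since the next-order corrections are amplified by $O(N)$ prefactors and must still land in $O(N^{-1})$.
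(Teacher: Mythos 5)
Your proposal is correct and follows essentially the same route as the paper: both combine the $((N+\ell)/N)$ exponents so that they telescope to $-\ell/2$ (hence a $1+O(N^{-1})$ factor), and both apply Stirling's formula to $\prod_{i=N+1}^{N+\ell}\Gamma(i/2)$ before cancelling the powers of $N$, $2$, and $e$. The only difference is presentational (you work additively with logarithms, the paper multiplicatively), and your bookkeeping of which expansion orders are needed is accurate.
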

	\begin{proof}
		To begin, we note that
		\[\frac{K_{N+\ell}(\tau)}{K_{N}(\tau)}=\frac{(N+\ell)^{-(N+\ell)(N+\ell+1)/4}2^{(N+\ell)(N+\ell+1)/4}}{N^{-N(N+1)/4}2^{N(N+1)/4}}(1+\tau)^{\ell/2}\prod_{i=N+1}^{N+\ell}\Gamma(i/2)=\]
		\[\left(\frac{N+\ell}{N}\right)^{-(N+\ell)(N+\ell+1)/4}N^{-N\ell/2-(\ell+1)\ell/4}2^{N\ell/2+(\ell+1)\ell/4}(1+\tau)^{\ell/2}\prod_{i=N+1}^{N+\ell}\Gamma(i/2).\]
		First off, we combine the factors in $D_{N,\ell}(\tau)$ involving $(\frac{N+\ell}{N})$. This gives \[\left(\frac{N+\ell}{N}\right)^{-(N+\ell)(N+\ell+1)/4+N/2}\left(\frac{N+\ell}{N}\right)^{(N+\ell)(N+\ell-1)/4}=\left(\frac{N+\ell}{N}\right)^{-\ell/2}=1+O(N^{-1}),\]
		so that
		\[D_{N,\ell}(\tau)=\left(1+O\left(N^{-1}\right)\right)N^{-N\ell/2-(\ell+1)\ell/4}2^{N\ell/2+(\ell+1)\ell/4}(1+\tau)^{\ell/2}\prod_{i=N+1}^{N+\ell}\Gamma(i/2).\]
		Now, for fixed $j\in \N$, we have by Stirling's approximation that
		\[\Gamma(N/2+j/2)=(1+O(N^{-1}))\left(\frac{4\pi}{N+j}\right)^{1/2}\left(\frac{N+j}{2e}\right)^{(N+j)/2}=\]
		\[\left(1+O(N^{-1})\right)(2\pi)^{1/2}2^{-(j-1)/2}\left(2e\right)^{-N/2}N^{(N+j-1)/2},\]
		where we have used in the final step that
		\[\left(\frac{N+j}{N}\right)^{(N+j)/2}=e^{\frac{j}{2}}(1+O(N^{-1})).\]
		From this we see that
		\[\prod_{i=N+1}^{N+\ell}\Gamma(i/2)=\left(1+O(N^{-1})\right)(2\pi)^{\ell/2}2^{-(\ell-1)\ell/4}(2e)^{-N\ell/2}N^{N\ell/2+\ell(\ell-1)/4}.\]
		In particular, we may simplify
		\[D_{N,\ell}(\tau)=\left(1+O(N^{-1})\right)N^{-\ell(\ell+1)/4+\ell(\ell-1)/4}2^{\ell(\ell+1)/4-\ell(\ell-1)/4}(1+\tau)^{\ell/2}(2\pi)^{\ell/2}e^{-N\ell/2}.\]
		Canceling gives the desired result.
	\end{proof}
	
	\section{Proof of Theorems \ref{theorem: moments of real bulk} and \ref{theorem: real bulk multi point} \label{section:proof of main theorem}}
	
	In this section we will prove our main results. We will begin by recalling the Pfaffian representation of the real correlation function and its asymptotics. From these we will immediately be able to prove Theorem \ref{theorem: real bulk multi point}. Then we proceed to develop the tools to also obtain Theorem \ref{theorem: moments of real bulk} from these asymptotics.
	
	To begin, we recall \cite{forrester-elliptic} that the real correlation function has a Pfaffian representation
	\[\rho^{\ell}_N(x_1,\dots,x_\ell)=N^{\ell/2}\pf([K_N(x_i,x_j)]_{1\le i,j\le \ell}),\label{eqn:even Pfaffian formula}\]
	where $K_N(x,y):\R^2\to M_2(\R)$ is a certain matrix kernel satisfying $K_N(x,y)=-K_N(y,x)^T$. Moreover, there is some $S_N:\R\times \R\to \R$ such that the kernel $K_N$ is given by
	\[K_N(x,y):=\begin{bmatrix}
		-I_N(x,y)& S_N(x,y)\\
		-S_N(y,x)& D_N(x,y)
	\end{bmatrix},\label{eqn:Pfaffian K-def}\]
	where
	\[D_N(x,y):=\partial_x S_N(x,y),\;\; I(x,y):=\frac{1}{2}\sign(y-x)-\int_x^y S_N(x,z)dz.\label{eqn:Pfaffian I and D def}\]
	The precise definition of $S_N$ is then given in Lemma \ref{lem:exact form of pfaffian kernel} below, which also proves these facts. However, to derive our main results, we only need two results. The first is simply that $S(x,y)$ extends to an entire function on $\C^2$, which is clear by inspection of Lemma \ref{lem:exact form of pfaffian kernel}. The section is an asymptotic formula for $K_N$ as $N\to \infty$.
	
	The first shows that the function $S_N(x,y)$ converges in the bulk to $\cal{S}_{\tau}(\sqrt{N}(x-y))$ where
	\[\cal{S}_{\tau}(x):=\frac{1}{\sq{2\pi(1-\tau^2)}}\exp \left(\frac{-x^2}{2(1-\tau^2)}\right).\]
	The asymptotic form of $K_N(x,y)$ is then given by $\cal{K}_{\tau}(\sqrt{N}(x-y))$, where $\cal{K}_{\tau}$ has essentially the same structure as $K_N$. In particular we have that
	\[\cal{K}_{\tau}(x):=\begin{bmatrix}
		-\cal{I}_{\tau}(x)& \cal{S}_{\tau}(x)\\
		-\cal{S}_{\tau}(x)& \cal{D}_{\tau}(x)
	\end{bmatrix},\]
	where
	\[\cal{D}_{\tau}(x):=\partial_x \cal{S}_{\tau}(x)=\frac{-x}{\sq{2\pi(1-\tau^2)^3}}\exp \left(\frac{-x^2}{2(1-\tau^2)}\right),\label{eqn:Pfaffian pure D}\]
	\[\cal{I}_{\tau}(x):=\frac{1}{2}\sign(-x)-\int_x^0 \cal{S}_{\tau}(x,z)dz=-\frac{1}{2}\sign(x)+\frac{1}{2}\int_0^{\frac{x}{\sqrt{(1- \tau
				^2)}}}e^{-\frac{t^2}{2}}dt,\label{eqn:Pfaffian pure I}\]
	Note that we have omitted the fixed $\tau$ from the notation for $K_N$, but included it in $\cal{K}_{\tau}$, the reason for which will become clear momentarily.
	
	The next result establishes that for $x,y\in (-1-\tau,1+\tau)$, the kernel $K_N(x,y)$ is exponentially close to $\cal{K}_{\tau}(\sq{N}(x-y))$, and is proven in Section \ref{section:pfaffian kernel}.
	\begin{lem}
		\label{lem:error estimate for Pfaffian kernel}
		Let us fix $\tau\in [0,1)$ and denote
		\[R_N(x,y):=K_N(x,y)-\cal{K}_{\tau}(\sq{N}(x-y)).\]
		Then for any $x,y\in (-1-\tau,1+\tau)$ there is $C,c>0$ such that
		\[\|R_N(x,y)\|_F\le Ce^{-Nc},\]
		where $\|A\|_F^2=\Tr(AA^T)$ is the Frobenius norm.\\
		Moreover, for any $\epsilon>0$ and $k\ge 0$ we may choose $c$ small enough that for $x,y\in (-1-\tau+\epsilon,1+\tau-\epsilon)$ we have that
		\[\sum_{0\le i,j\le k}\|\partial_x^i\partial_y^jR_N(x,y)\|_F\le Ce^{-cN}.\]
	\end{lem}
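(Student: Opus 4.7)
The plan is to establish the exponential bound on $R_N$ by first reducing to an estimate on $S_N$ alone, then handling the remaining components via simple manipulations, and finally obtaining the derivative bounds via Cauchy's formula. The estimate on $S_N$ itself is the only substantial work.

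First, using the explicit representation for $S_N$ that will be derived in Lemma \ref{lem:exact form of pfaffian kernel}, one may write $S_N(x,y)$ as a truncated Mehler-type Hermite kernel (plus a boundary correction appearing when $N$ is odd). Mehler's formula evaluates the complete (untruncated) kernel to exactly $\cal{S}_\tau(\sq{N}(x-y))$ after the bulk rescaling, so the error $S_N(x,y) - \cal{S}_\tau(\sq{N}(x-y))$ reduces to bounding two pieces: the Hermite tail $\sum_{k\ge N-1}$ and the odd-$N$ edge correction. For $(x,y)$ in the shrunken bulk $(-(1+\tau)+\epsilon,1+\tau-\epsilon)^2$, the Hermite polynomials $H_k$ with $k\ge N$ are evaluated at sub-critical arguments, and a standard Plancherel–Rotach (equivalently, saddle-point on the contour integral representation of $H_k$) argument produces exponential decay in $k$, summing to give the desired bound. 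The odd-$N$ edge correction is controlled by the same mechanism that makes it negligible away from the spectral edge.

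The remaining components of $K_N$ are immediate once the $S_N$ bound is in hand. The identity $D_N(x,y) = \partial_x S_N(x,y)$, together with the corresponding identity $\cal{D}_\tau = \partial_x \cal{S}_\tau$, transfers the bound to $D_N - \cal{D}_\tau$ (once the derivative version of the estimate below is available). For $I_N - \cal{I}_\tau$, the $\sign$ terms in (\ref{eqn:Pfaffian I and D def}) and (\ref{eqn:Pfaffian pure I}) cancel exactly on the bulk (since $x\ne y$ and they agree at the relevant rescaling), and the remaining difference of integrals is bounded pointwise by $|y-x|$ times the supremum of $|S_N(x,\cdot) - \cal{S}_\tau(\sq{N}(x-\cdot))|$ on $[x,y]$, which is again exponentially small in $N$.

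For the derivative bounds, the key observation is that $S_N$ extends to an entire function on $\C^2$ (evident from the explicit formula in Lemma \ref{lem:exact form of pfaffian kernel}), and the same saddle-point analysis remains valid on a complex polydisc of fixed radius about any point $(x,y)$ inside the shrunken bulk $(-(1+\tau)+\epsilon,1+\tau-\epsilon)^2$ (by taking $\epsilon$ slightly smaller to allow for a margin). This gives the exponential bound pointwise on the polydisc. Cauchy's integral formula on this polydisc then converts the pointwise exponential bound into a uniform bound on all mixed partial derivatives up to any fixed order $k$, with a controlled deterioration in the constant $c$. The main technical obstacle is the uniformity of the saddle-point estimate: we need exponential decay not just pointwise but uniformly on the shrunken bulk and in a complex neighborhood. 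This uniformity however follows from the uniform non-degeneracy and uniform separation of the saddle point from the contour for parameters inside the shrunken bulk, which is a standard (if notationally heavy) verification.
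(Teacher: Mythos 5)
Your decomposition and the core estimate follow the paper's route almost exactly: reduce to $S_N$, use Mehler's formula to identify the full Hermite sum with $\cal{S}_\tau(\sqrt{N}(x-y))$, bound the tail $\sum_{k\ge N-1}$ and the odd-$N$ correction $\phi_{N-2}$ by exponential decay in the shrunken bulk, and transfer to $D_N$ and $I_N$ via the defining relations (with the $\sign$ terms cancelling in $I_N-\cal{I}_\tau$, exactly as you note). One point of imprecision worth flagging: the tail decay does not come from the Hermite arguments being ``sub-critical'' --- after the rescaling, $C_k(\sqrt N x)$ is evaluated at $\sqrt{2k}\cdot\sqrt{N/k}\,x/(2\sqrt\tau)$, which for $x$ near the edge is in the super-critical region $|u|>1$. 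The correct mechanism (which your saddle-point formulation would still reveal) is the mismatch between the elliptic weight $e^{-kx^2/(2(1+\tau))}$ and the Hermite growth: the combined exponent $h_\tau(x)$ is uniquely maximized at $\pm(1+\tau)$ where it vanishes, hence is strictly negative, uniformly, on the shrunken bulk.

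Where you genuinely diverge from the paper is the derivative step. The paper stays on the real axis and differentiates term by term using the recurrence $H_k'(x)=2xH_k(x)-H_{k+1}(x)$, which costs only a polynomial factor $k^{\ell}$ per derivative and is absorbed by the exponential decay. You instead propose analytic continuation plus Cauchy's formula on a fixed polydisc. This is valid but requires more care than you acknowledge: both $S_N(z,w)$ and $\cal{S}_\tau(\sqrt N(z-w))$ individually grow like $e^{CN\delta}$ when moved a distance $\delta$ off the real axis (e.g.\ $|\cal{S}_\tau(\sqrt N(z-w))|$ can reach $e^{2N\delta^2/(1-\tau^2)}$), so you must verify that the \emph{difference} remains exponentially small on the polydisc, which forces $\delta$ to be a fixed small constant chosen against the real-axis decay rate $c$, and requires complex Plancherel--Rotach bounds. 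The recurrence route avoids all of this, which is presumably why the paper takes it; your route buys nothing extra here since only finitely many derivatives are needed. Either way the argument closes, so I would call your proposal correct modulo the flagged mischaracterization of the decay mechanism and the extra (standard but nontrivial) complex-analytic verification your derivative step demands.
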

	
	We note that it has been known that $K_N$ converges $\cal{K}_{\tau}$ in various regimes. For example, when $N$ is even and $\tau \neq 0$, the proof of (5.25) in \cite{forrester-elliptic} easily generalizes to show this local convergence in the bulk. Hence the content of Lemma \ref{lem:error estimate for Pfaffian kernel} is to establish the rate and uniformity. This is important as such a level of precision allows us to replace $K_N$ in the expression for the determinant given by Lemma \ref{lem:tuca-relation} at the level required to show Theorem \ref{theorem: moments of real bulk}, as we show in the following corollary.
	
	\begin{corr}
		\label{corr:error estimate for pfaffian}
		Fix $\ell\ge 1$, $\epsilon>0$ and $K\ge 0$, and define
		\[\bar{R}_N(\mu_1,\cdots,\mu_\ell):=\pf(\l[K_{N}\l(\mu_i,\mu_j\r)\r]_{1\le i,j\le \ell})-\pf\l(\l[\cal{K}_{\tau}\left(\sqrt{N}\l(\mu_i-\mu_j\r)\right)\r]_{1\le i,j\le \ell}\r).\]
		Then there is $C,c>0$ such that for $\mu_1,\cdots,\mu_\ell\in (-(1+\tau)+\epsilon,1+\tau-\epsilon)$ we have
		\[\sum_{0\le i_1,\cdots,i_\ell\le K}|\partial_{\mu_1}^{i_1}\cdots \partial_{\mu_\ell}^{i_l}\bar{R}_N(\mu_1,\cdots,\mu_\ell)|\le Ce^{-cN}.\]
	\end{corr}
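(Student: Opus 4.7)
The plan is to bootstrap Lemma \ref{lem:error estimate for Pfaffian kernel} from the kernel $R_N$ to the Pfaffian difference $\bar{R}_N$ via a telescoping expansion. I would apply the standard Pfaffian formula $\pf(A) = \sum_\sigma \sigma^\# \prod_{(a,b)\in\sigma} A_{ab}$, summed over perfect matchings $\sigma$ on $\{1,\dots,2\ell\}$, to the skew-symmetric $2\ell\times 2\ell$ matrices $M_1 = [K_N(\mu_i,\mu_j)]_{i,j}$ and $M_2 = [\cal{K}_\tau(\sq{N}(\mu_i-\mu_j))]_{i,j}$. Using the telescoping identity
\[\prod_k a_k - \prod_k b_k = \sum_m \Big(\prod_{j<m} a_j\Big)(a_m - b_m)\Big(\prod_{j>m} b_j\Big)\]
inside each matching product, with the $a$'s being scalar entries of $M_1$ and the $b$'s scalar entries of $M_2$, this writes $\bar{R}_N$ as a finite sum of products in which each summand contains exactly one factor equal to some scalar entry of $R_N(\mu_i,\mu_j)$, and all remaining factors are scalar entries of $K_N$- or $\cal{K}_\tau(\sq{N}\,\cdot)$-blocks.

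Next, I would differentiate term by term on the open set $U = \{(\mu_1,\dots,\mu_\ell) : \mu_i \ne \mu_j \text{ for } i\ne j\}$. On $U$ every factor is smooth in the $\mu$'s, because the $\frac{1}{2}\sign(\cdot)$ pieces appearing in $I_N$ and $\cal{I}_\tau$ are locally constant off the diagonal. Applying the multivariate Leibniz rule to $\partial^\alpha := \partial_{\mu_1}^{i_1}\cdots\partial_{\mu_\ell}^{i_\ell}$ produces a further finite sum of products. In each such product, the factor inherited from $R_N$ carries partial derivatives of total order at most $K\ell$ in the two relevant variables, hence is bounded by $Ce^{-cN}$ uniformly on the bulk box by Lemma \ref{lem:error estimate for Pfaffian kernel}. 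The remaining factors are derivatives of entries of $K_N$ or of $\cal{K}_\tau(\sq{N}\,\cdot)$; writing $K_N = \cal{K}_\tau(\sq{N}\,\cdot) + R_N$ and observing that $\cal{S}_\tau$, $\cal{D}_\tau$, and the smooth piece of $\cal{I}_\tau$ (the error-function integral in \eqref{eqn:Pfaffian pure I}) are Schwartz-class with derivatives bounded uniformly on $\R$, each such derivative is bounded by $N^{O(K)}$ through the chain rule, plus an exponentially small correction from $R_N$ itself. Multiplying, every term is bounded by $Ce^{-cN}\cdot N^{O(K\ell)} \le C' e^{-c'N}$ for any $c'<c$ and $N$ large.

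Summing over the finitely many matchings $\sigma$, telescoping positions $m$, and Leibniz distributions yields the estimate on $U$. To extend it to the full box it suffices to observe that $\bar{R}_N$ is smooth across the diagonals $\mu_i = \mu_j$: the non-smoothness of $K_N$ and $\cal{K}_\tau(\sq{N}\,\cdot)$ comes exclusively from the matching $\frac{1}{2}\sign$ pieces of $I_N$ and $\cal{I}_\tau$, which cancel identically in $R_N = K_N - \cal{K}_\tau(\sq{N}\,\cdot)$, so the discontinuous contributions to $\pf(M_1)$ and $\pf(M_2)$ coincide and drop out of the difference. Thus $\partial^\alpha \bar{R}_N$ is continuous on $(-(1+\tau)+\epsilon,1+\tau-\epsilon)^\ell$, and the uniform bound on the dense subset $U$ propagates to the full box. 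The main obstacle is precisely this $\sign$-type non-smoothness: one must verify that the telescoped expansion, while containing individually non-smooth terms whose derivatives only exist on $U$, reassembles into a genuinely smooth $\bar{R}_N$ so that the $L^\infty$ estimate extends legitimately across $\mu_i=\mu_j$.
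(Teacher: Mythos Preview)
Your argument on the open set $U$ of distinct $\mu_i$'s is essentially the paper's proof: expand the Pfaffian by its Leibniz formula, write $K_N=\cal{K}_\tau(\sq{N}\,\cdot)+R_N$ so that every surviving term in the difference carries at least one $R_N$-factor, apply the product rule, bound the $R_N$-derivatives by Lemma~\ref{lem:error estimate for Pfaffian kernel}, and bound the $\cal{K}_\tau(\sq{N}\,\cdot)$-derivatives by the trivial $O(N^{i+j+1/2})$. The paper groups terms by the number $s\ge 1$ of $R_N$-factors rather than telescoping, but this is the same expansion.

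One remark on your final paragraph: the cancellation of the $\tfrac12\sign$ pieces in $R_N$ does \emph{not} imply that the discontinuous contributions to $\pf(M_1)$ and $\pf(M_2)$ coincide. In each telescoped term the $\sign$-factor multiplies an entry of $K_N$ or of $\cal{K}_\tau$, and these differ; so the non-smooth part of $\bar{R}_N$ is $\sign(\mu_j-\mu_i)$ times a smooth entry of $R_N$, which is continuous (that entry vanishes on the diagonal by skew-symmetry) but not obviously $C^K$. The paper's own proof does not address this point either --- it simply asserts the derivative bound on $\cal{K}_\tau$ ``for any $\mu',\mu''\in\R$''. The issue is handled downstream in Lemma~\ref{lem:ratio limit for Kn}, by restricting to the ordered region $\R^\ell_>$ and replacing each $\sign(\mu_j-\mu_i)$ with the constant $\sign(i-j)$, which yields genuinely analytic functions. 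For the Corollary's actual applications (distinct points in Theorem~\ref{theorem: real bulk multi point}, one-sided derivatives $\partial_{\Delta_\ell}^+$ in Proposition~\ref{prop:main prop}) your bound on $U$ already suffices.
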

	\begin{proof}
		Employing the Leibniz formula for the Pfaffian, and the product rule for differentials, we may write $\partial_{\mu_1}^{i_1}\cdots \partial_{\mu_\ell}^{i_l}\pf([K_N(\mu_i,\mu_j)]_{1\le i,j\le \ell})$ as a polynomial of degree $\ell$ in the entries of $\{\partial_{\mu_i}^{k}\partial_{\mu_j}^{l}K_N(\mu_i,\mu_j)\}_{1\le i,j\le \ell, 1\le k,l\le K}$, and similarly for $\partial_{\mu_1}^{i_1}\cdots \partial_{\mu_\ell}^{i_l}\pf([\cal{K}_{\tau}(\sqrt{N}(\mu_i-\mu_j))]_{1\le i,j\le \ell})$. Letting $R_N$ be as in Proposition \ref{prop:main prop}, we may use the equality $K_N(x,y)=\cal{K}_{\tau}(\sqrt{N}(x-y))+R_N(x,y)$ to expand each term in this polynomial, and conclude that for some large constant $C_0:=C_0(\ell,K)$ we have the course bound
		\[\sum_{0\le i_1,\cdots,i_\ell\le K}\left|\partial_{\mu_1}^{i_1}\cdots \partial_{\mu_\ell}^{i_l}\bar{R}_N(\mu_1,\cdots,\mu_\ell)\right|\le C_0\sum_{s=1}^{\ell}\sum_{i,j=1}^{\ell}E_{N,s}(\mu_i,\mu_j)\]
		where here
		\[E_{N,s}(\mu',\mu'')=\left(\sum_{i,j=0}^{K}\|\partial_{\mu'}^i\partial_{\mu''}^jR_N(\mu',\mu'')\|_F\right)^{s}\left(\sum_{i,j=0}^{K}\|\partial_{\mu'}^i\partial_{\mu''}^j\cal{K}_{\tau}(\sqrt{N}(\mu'-\mu''))\|_F\right)^{\ell-s}.\]
		Inspection of $\cal{K}(\sqrt{N}(\mu'-\mu''))$ shows there is $C$ such that for any $\mu',\mu''\in \R$,
		\[\|\partial_{\mu'}^i\partial_{\mu''}^j\cal{K}_{\tau}(\sqrt{N}(\mu'-\mu''))\|_F\le C N^{i+j+1/2}.\]
		Moreover, we see from Lemma \ref{lem:error estimate for Pfaffian kernel} there is $C,c>0$ such that
		\[\sum_{i,j=0}^{K}\|\partial_{\mu'}^i\partial_{\mu''}^jR_N(\mu',\mu'')\|_F=Ce^{-Nc}.\]
		Thus shrinking $c>0$ and enlarging $C>0$ completes the proof.
	\end{proof}
	
	By Lemmas \ref{lem:tuca-relation} and \ref{lem:limit for D} and (\ref{eqn:even Pfaffian formula}) we see that for any distinct $\mu_1,\cdots,\mu_\ell\in \R$, we have that
	\[\E\left[\prod_{i=1}^{\ell}|\det(A_N-\mu_iI)|\right]=\]\[e^{\frac{N \ell}{2} \left(\frac{\mu^2}{1+\tau}-1\right)}\left(4\pi (1+\tau)\right)^{\ell/2}\frac{\pf(\l[K_{N+\ell}\l(\tilde{\mu}_i,\tilde{\mu}_j\r)\r]_{1\le i,j\le \ell})}{|\Delta(\mu_1,\dots \mu_\ell)|}\l(1+O\left(N^{-1}\right)\r).\label{eqn:determinant in terms of pfaffian}\]
	where the error term is independent of $(\mu_1,\cdots,\mu_\ell)$. Thus, we primarily need to better understand limiting expressions in Corollary \ref{corr:error estimate for pfaffian}. To proceed, we must understand $\cal{K}_\tau$ better. The key is to note that for any $\tau\in [0,1)$, we have that
	\[\begin{bmatrix}1 & 0\\ 0 & \sqrt{1-\tau^2} \end{bmatrix}\cal{K}_{\tau}\left(x\sqrt{1-\tau^2}\right)\begin{bmatrix}1 & 0\\ 0 & \sqrt{1-\tau^2} \end{bmatrix}=\cal{K}_{0}(x)=\cal{H}(x),\]
	where $\cal{H}$ is the kernel in Theorem \ref{theorem: real bulk multi point}. In particular, for any $\mu_1,\cdots,\mu_\ell\in \R$ we have
	\[\pf(\l[\cal{K}_{\tau}\l(\mu_i-\mu_j\r)\r]_{1\le i,j\le \ell})=\frac{1}{(1-\tau^2)^{\ell/2}}\pf\l(\left[\cal{H}\left(\frac{\mu_i-\mu_j}{\sqrt{1-\tau^2}}\right)\r]_{1\le i,j\le \ell}\r).\label{eqn:Kt and H pfaffian relation}\]
	
	\begin{proof}[Proof of Theorem \ref{theorem: real bulk multi point}]
		Noting that $\Delta(\mu_1,\cdots,\mu_\ell)$ is a homogeneous polynomial of degree $\ell(\ell-1)/2$, applying Corollary \ref{corr:error estimate for pfaffian} and (\ref{eqn:Kt and H pfaffian relation}) to (\ref{eqn:determinant in terms of pfaffian}), and noting the relation $\sqrt{N+\ell}\tilde{\mu}_i=\sqrt{N}\mu_i$ completes the proof.
	\end{proof}
	
	With Theorem \ref{theorem: real bulk multi point} proven, we now address the case of higher moments, in which we want to take multiple points $\mu_1,\cdots,\mu_\ell$ in (\ref{eqn:determinant in terms of pfaffian}) to the same point $\mu$. The left-hand side is obviously continuous in $(\mu_1,\cdots, \mu_\ell)$. However, the ratio on the right-hand side has the Vandermonde determinant in the denominator, which must thus be canceled by a similar factor in $\pf(\l[K_{N+\ell}\l(\tilde{\mu}_i,\tilde{\mu}_j\r)\r]_{1\le i,j\le \ell})$ as we know the result must be continuous. 
	
	Computing the result however will require some terminology. For this, we note that for any $k$-variate polynomial 
	\[f(\mu_1,\cdots,\mu_k)=\sum_{i_1,\cdots,i_k=1}^{\infty}a_{i_1,\cdots,i_k}\mu^{i_1}\cdots \mu^{i_k},\]
	we may associate a differential operator on $\R^\ell$ by
	\[\partial_f:=\sum_{i_1,\cdots,i_k=1}^{\infty}a_{i_1,\cdots,i_k}\frac{\partial^{\sum_{l=1}^k i_l}}{\partial^{i_1} \mu_1\cdots \partial^{i_k} \mu_k}.\]
	It is clear that this map preserves both addition and multiplication of polynomials.
	
	We are interested in applying this to the Vandermonde determinants. For clarity here, we will momentarily denote the Vandermonde determinant in $k$-variables as $\Delta_k$ and denote the associated differential operator $\partial_{\Delta_k}$. We note this is a homogeneous differential operator of order $k(k-1)/2$. Next recall by the Leibniz formula we have that
	\[\Delta_k(\mu_1,\cdots, \mu_k)=\sum_{\sigma\in \Sigma_k}(-1)^{\sigma}\mu^{\sigma(1)-1}_1\cdots \mu^{\sigma(k)-1}_k,\]
	where $\Sigma_k$ is the symmetric group on $k$-elements, and $(-1)^\sigma$ denotes the parity of a permutation $\sigma\in \Sigma_k$. In particular, we have that
	\[\partial_{\Delta_k}=\sum_{\sigma\in \Sigma_k}(-1)^{\sigma}\frac{\partial^{k(k-1)/2}}{\partial^{\sigma(1)-1}  \mu_{1}\partial^{\sigma(2)-1}  \mu_{2}\cdots \partial^{\sigma(k)-1}  \mu_{k}}.\]
	From these expression, it is clear that when computing $\partial_{\Delta_k}\Delta_k$, each partial derivative term vanishes on all terms but its corresponding multinomial term, so that
	\[\partial_{\Delta_k}\Delta_k=\sum_{\sigma\in \Sigma_k}(-1)^{2\sigma}\frac{\partial^{k(k-1)/2}\mu^{\sigma(1)-1}_1\cdots \mu^{\sigma(k)-1}_k}{\partial^{\sigma(1)-1}  \mu_{1}\partial^{\sigma(2)-1}  \mu_{2}\cdots \partial^{\sigma(k)-1}  \mu_{k}}=k!\prod_{i=1}^{k}(k-1)!=\prod_{i=1}^{k+1}\Gamma(i).\label{eqn:Delta_k constant}\]
	
	We now give a purely algebraic result concerning the limit after one divides by the Vandermonde determinant.
	
	\begin{lem}
		\label{lem:abstract-nonsense complex analysis}
		Fix $\ell\in \N$, let $U\subseteq \C^\ell$ be an open set, and let $f:U\to \C$ be a holomorphic function such that $f(\mu_1,\cdots, \mu_\ell)=0$ if $\mu_j=\mu_k$ for any $j\neq k$. Then for any $\mu\in \C$ such that $(\mu,\cdots,\mu)\in U$ we have that
		\[\lim_{\mu_1,\cdots ,\mu_\ell\to \mu}\frac{f(\mu_1,\cdots,\mu_\ell)}{\Delta(\mu_1,\cdots,\mu_\ell)}=\frac{\partial_{\Delta_\ell}f(\mu,\cdots,\mu)}{\prod_{i=1}^{\ell+1}\Gamma(i)}.\]
	\end{lem}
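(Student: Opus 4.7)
The plan is to establish a factorization $f = \pm\Delta\cdot g$ with $g$ holomorphic on $U$, which reduces the limit on the left-hand side to evaluating $g$ at the diagonal point; I would then recover this value via a Taylor expansion of $f$ and the action of the operator $\partial_{\Delta_\ell}$.

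The first step is to show that $f/\Delta$ extends to a holomorphic function $g$ on $U$. For any pair $i<j$, a holomorphic function vanishing on the smooth hyperplane $\{\mu_i = \mu_j\}$ is divisible by $\mu_i-\mu_j$ in the ring of holomorphic functions on $U$, as seen by Taylor-expanding in the variable $\mu_i-\mu_j$: the constant term in that variable vanishes by hypothesis. Since the linear forms $\{\mu_i-\mu_j\}_{i<j}$ are pairwise coprime (their common zero sets have codimension at least two), one can iterate this divisibility, peeling off each factor in turn, to produce $f = \pm \Delta \cdot g$ with $g$ holomorphic. Consequently, $\lim_{\mu_i\to\mu} f(\mu_1,\ldots,\mu_\ell)/\Delta(\mu_1,\ldots,\mu_\ell) = g(\mu,\ldots,\mu)$.

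Next, I would translate coordinates by setting $\epsilon_i = \mu_i-\mu$. The translation-invariance of the Vandermonde gives $\Delta(\mu_1,\ldots,\mu_\ell)=\Delta(\epsilon_1,\ldots,\epsilon_\ell)$, and Taylor-expanding $g$ at $(\mu,\ldots,\mu)$ yields
\[
f(\mu+\epsilon_1,\ldots,\mu+\epsilon_\ell) = \Delta(\epsilon_1,\ldots,\epsilon_\ell)\sum_{\beta\in \N^\ell}\frac{\epsilon^\beta}{\beta!}(\partial^\beta g)(\mu,\ldots,\mu).
\]
The leading part of this expansion in the total degree of $\epsilon$ is $g(\mu,\ldots,\mu)\Delta(\epsilon_1,\ldots,\epsilon_\ell)$, which is homogeneous of degree $\ell(\ell-1)/2$; every other term has strictly higher degree in $\epsilon$.

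Finally, I would apply $\partial_{\Delta_\ell}$ and evaluate at the diagonal. Since every monomial summand of $\partial_{\Delta_\ell}$ has total order exactly $\ell(\ell-1)/2$, acting on a monomial $\epsilon^\alpha$ and then setting $\epsilon=0$ produces zero unless $|\alpha|=\ell(\ell-1)/2$: for $|\alpha|<\ell(\ell-1)/2$ we have $\sum_i(\sigma(i)-1)=\ell(\ell-1)/2>|\alpha|$, so in each term at least one partial derivative exceeds the corresponding exponent; for $|\alpha|>\ell(\ell-1)/2$ the result is still a polynomial in $\epsilon$ of positive total degree. Hence only the degree-$\ell(\ell-1)/2$ Taylor term survives, giving $(\partial_{\Delta_\ell}f)(\mu,\ldots,\mu) = g(\mu,\ldots,\mu)\cdot(\partial_{\Delta_\ell}\Delta)$, which by (\ref{eqn:Delta_k constant}) equals $g(\mu,\ldots,\mu)\prod_{i=1}^{\ell+1}\Gamma(i)$. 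Dividing by this constant produces the claimed identity. The only step requiring any genuine complex-analytic input is the initial divisibility, which is standard; if one wished to avoid it entirely, one could work purely formally with the Taylor series of $f$ at $(\mu,\ldots,\mu)$, noting that such a series divisible by each $\epsilon_i-\epsilon_j$ is necessarily divisible by $\Delta(\epsilon)$ in the ring of formal power series.
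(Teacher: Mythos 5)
Your argument is correct, and it shares the paper's skeleton: factor $f=g\,\Delta$ with $g$ holomorphic, identify the limit with $g(\mu,\dots,\mu)$, and then show $\partial_{\Delta_\ell}f$ restricted to the diagonal equals $g(\mu,\dots,\mu)\,\partial_{\Delta_\ell}\Delta_\ell$. Where you differ is in how the two sub-steps are justified. For the factorization, the paper invokes the analytic Nullstellensatz together with a uniqueness-and-gluing argument, whereas you peel off the linear factors $\mu_i-\mu_j$ one at a time by Taylor expansion; your route is more self-contained, and your parenthetical codimension-two remark is exactly what is needed to see that, after dividing by $\mu_1-\mu_2$, the quotient still vanishes on each remaining hyperplane (by density and continuity). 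For the derivative identity, the paper writes $\partial_{\Delta_\ell}=\prod_{i<j}(\partial_{\mu_i}-\partial_{\mu_j})$ and runs a Leibniz-rule argument: every term in which some factor lands on $g$ carries a partial product of the linear factors of $\Delta_\ell$ and hence vanishes on the diagonal. You instead Taylor-expand $g$ at the diagonal point and exploit the homogeneity of $\partial_{\Delta_\ell}$ of order $\ell(\ell-1)/2$, so that only the lowest-degree term $g(\mu,\dots,\mu)\Delta(\epsilon)$ survives evaluation at $\epsilon=0$. Both mechanisms are elementary and rigorous; your degree-counting version makes especially transparent why the normalizing constant is exactly $\partial_{\Delta_\ell}\Delta_\ell=\prod_{i=1}^{\ell+1}\Gamma(i)$, and your closing observation that the whole computation can be carried out in the ring of formal power series is a nice bonus the paper does not make explicit.
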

	\begin{proof}
		We first show that there exists a unique holomorphic function $g:U\to \C$ such that
		\[g(\mu_1,\cdots,\mu_\ell)=\frac{f(\mu_1,\cdots,\mu_\ell)}{\Delta(\mu_1,\cdots,\mu_\ell)}.\label{eqn:division by Vandermonde}\]
		For this, note that as $\Delta_\ell$ is the minimal polynomial corresponding the to union of hyperplanes
		\[\C^{\ell}_{=}:=\{(\mu_1,\cdots, \mu_\ell)\in \C^\ell: \mu_i=\mu_j \text{ for some }i\neq j\},\]
		it follows from the analytic Nullstellensatz (see Proposition 1.1.29 of \cite{complex-geo}) that such $g$ uniquely exists in a neighborhood of any given point of $U$. However, as $g$ is clearly uniquely determined on $U\setminus \C^{\ell}_{=}$, these must glue to a globally defined function on $U$.
		
		Now we must only verify that
		\[\left(\partial_{\Delta_\ell}\Delta_\ell\right)g(\mu,\cdots,\mu)=\partial_{\Delta_\ell}f(\mu_1,\cdots,\mu_\ell).\]
		For this, note that
		\[\partial_{\Delta_\ell}f=\partial_{\Delta_\ell}\left(g\Delta_{\ell}\right)=\prod_{i<j}\left(\partial_{\mu_i}-\partial_{\mu_j}\right)\left(g\Delta_{\ell}\right).\label{eqn:ignore-23893827}\]
		However, for any non-zero subset $S\subseteq \{(i,j)\in \N: i<j\}$, it is clear that
		\[\prod_{i<j,(i,j)\notin S}\left(\partial_{\mu_i}-\partial_{\mu_j}\right)\Delta_{\ell}(\mu_1,\cdots,\mu_\ell)=\prod_{i<j,(i,j)\notin S}\left(\partial_{\mu_i}-\partial_{\mu_j}\right)\left(\prod_{i<j}\left(\mu_i-\mu_j\right)\right),\]
		which must vanish when $\mu_1=\cdots=\mu_\ell$ by employing the product rule repeatedly on the right. So repeatedly applying the product rule again to (\ref{eqn:ignore-23893827}), we see that
		\[\partial_{\Delta_{\ell}}f=\partial_{\Delta_\ell} (g\Delta_\ell)=g\partial_{\Delta_\ell}\Delta_\ell+h\]
		where the function $h$ which vanishes when $\mu_1=\cdots =\mu_\ell$. This immediately gives the desired result.
	\end{proof}
	
	We may now use this lemma to get an expression for the right-hand expression in (\ref{eqn:determinant in terms of pfaffian}) when the points are limited to the same point. For this, we will need to take care to consider the ordering of our points, as Lemma \ref{lem:tuca-relation} involves division by the absolute value of the Vandermonde determinant, not the Vandermonde determinant itself.
	
	For this it is useful to introduce the subset $\R^\ell_{\ge }\subseteq \R^{\ell}$ of ordered sequences given by
	\[\R^\ell_{\ge }=\{(\mu_1,\cdots, \mu_\ell)\in \R^\ell:\mu_1\ge \mu_2\ge \cdots\ge \mu_\ell\},\]
	and similarly for $\R^{\ell}_{>}$. When a function $f:\R^\ell_{>}\to \R$ is such that $\partial_{\Delta_\ell}f$ exists everywhere, and extends continuously to $\R^\ell_{\ge }$, we will denote the resulting extension as $\partial_{\Delta_\ell}^+f$. When $f$ extends to a smooth function $\bar{f}$ on a neighborhood of $\R_{\ge}^{\ell}\subset \R^\ell$, $\partial_{\Delta_\ell}^+f=\partial_{\Delta_\ell}\bar{f}$. However, this restricted differential is convenient, as it also satisfies $\partial_{\Delta_\ell}^+|\Delta_\ell|=\partial_{\Delta_\ell}\Delta_\ell$.
	
	\begin{lem}
		\label{lem:ratio limit for Kn}
		For any $x\in (-1-\tau,1+\tau)$, and either $K=K_N$ or $K=\cal{K}_\tau$, we have that
		\[\lim_{\mu_1>\mu_2>\cdots>\mu_\ell\to \mu}\frac{\pf([K(\mu_i,\mu_j)]_{1\le i,j\le \ell})}{|\Delta(\mu_1,,\mu_\ell)|}=\frac{1}{\prod_{i=1}^{\ell+1}\Gamma(i)}\partial_{\Delta_\ell}^+\pf([K(\mu_i,\mu_j)]_{1\le i,j\le \ell})\big|_{\mu_1=\mu_2=\cdots=\mu_\ell=\mu},\]
		where we take $\cal{K}_\tau(x,y)$ to mean $\cal{K}_{\tau}(x,y)=\cal{K}_{\tau}(x-y)$.
	\end{lem}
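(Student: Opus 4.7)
The plan is to realize the ratio on the left-hand side as the limit of a holomorphic function divided by the Vandermonde determinant, then invoke Lemma \ref{lem:abstract-nonsense complex analysis}. Fix $\mu \in (-1-\tau, 1+\tau)$ and set $F(\mu_1,\ldots,\mu_\ell) := \pf([K(\mu_i,\mu_j)]_{1\le i,j\le\ell})$ on $\R^\ell_>$. I would produce a holomorphic $\tilde{F} : \C^\ell \to \C$ that agrees with $F$ on $\R^\ell_>$ and vanishes on the union of diagonal hyperplanes $\C^\ell_=$; the lemma will then follow immediately.

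To construct the extension, I note that on $\R^\ell_>$ the signs $\sign(\mu_j - \mu_i)$ appearing in the $(1,1)$ block entries of $K(\mu_i, \mu_j)$ are locally constant ($-1$ for $i<j$ and $+1$ for $i>j$), and that $S_N$ extends to an entire function on $\C^2$ (by Lemma \ref{lem:exact form of pfaffian kernel}). Substituting the fixed sign value $-1$ into $-I_N(\mu_i, \mu_j)$ for $i < j$ yields an entire extension $\tilde M_{ij}$ of that block; I then set $\tilde M_{ji} := -\tilde M_{ij}^T$ and $\tilde M_{ii} := K(\mu_i, \mu_i)$, the latter being antisymmetric because $K_N(y,x) = -K_N(x,y)^T$ forces $I_N(x,x) = D_N(x,x) = 0$. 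Thus $\tilde F := \pf(\tilde M)$ is polynomial in the entire entries of $\tilde M$, hence holomorphic on all of $\C^\ell$, and equals $F$ on $\R^\ell_>$ by construction.

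The key step is the vanishing. I claim that the bottom row of every $2 \times 2$ block $\tilde M_{ik}$ is exactly $(-S_N(\mu_k, \mu_i),\, D_N(\mu_i, \mu_k))$: this is immediate for $k > i$ from the definition of the extension, holds for $k = i$ via $D_N(\mu_i,\mu_i) = 0$, and for $k < i$ follows by expanding $\tilde M_{ik} = -\tilde M_{ki}^T$ and using the antisymmetry $D_N(y,x) = -D_N(x,y)$. Consequently, row $2i$ of the full $2\ell \times 2\ell$ matrix $\tilde M$ has entries $(-S_N(\mu_k,\mu_i),\, D_N(\mu_i,\mu_k))_{k=1}^\ell$, which coincides with row $2j$ whenever $\mu_i = \mu_j$. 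Hence $\det(\tilde M) = 0$ on the real set $\{\mu_i = \mu_j\} \cap \R^\ell$, and by the identity theorem $\tilde F \equiv 0$ on $\{\mu_i = \mu_j\} \subseteq \C^\ell$.

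Applying Lemma \ref{lem:abstract-nonsense complex analysis} to $\tilde F$ at $(\mu,\ldots,\mu)$ and then restricting the resulting limit to $\R^\ell_>$ (on which $\Delta > 0$ so $|\Delta| = \Delta$, and $\tilde F = F$) yields the stated identity, after identifying $\partial_{\Delta_\ell} \tilde F(\mu,\ldots,\mu) = \partial^+_{\Delta_\ell} F(\mu,\ldots,\mu)$ per the definition of $\partial^+$. The argument for $K = \cal K_\tau$ is identical, using that $\cal S_\tau$ is even, $\cal D_\tau$ odd, and $\cal I_\tau$ odd (with sole non-analyticity in $\sign$). The main subtlety will be the vanishing step: a naive attempt to equate entire block rows $i$ and $j$ of $\tilde M$ at $\mu_i = \mu_j$ fails, since the extended $(1,1)$ entries of $\tilde M_{ii}$ and $\tilde M_{ij}|_{\mu_i = \mu_j}$ differ by $\tfrac{1}{2}$ (a remnant of the sign function); only the bottom rows of each block, which never involved a sign, actually match.
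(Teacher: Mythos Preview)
Your proof is correct and follows the same route as the paper: freeze the $\sign$ terms to their constant values on $\R^\ell_>$ to obtain an entire extension of the Pfaffian, then invoke Lemma \ref{lem:abstract-nonsense complex analysis}. The one place you go further is the vanishing step. The paper simply asserts that $\pf([K(\mu_i,\mu_j)])=0$ when points coincide and then passes to the extension $\hat K_{ij}$ without rechecking this; you correctly note that $\hat K_{ij}(\mu,\mu)$ and $\hat K_{ii}(\mu,\mu)$ differ by $\tfrac12$ in the $(1,1)$ entry, so the naive ``equal block rows'' argument fails for the extension, and you supply the fix by observing that rows $2i$ and $2j$ of the full $2\ell\times 2\ell$ matrix (which never involve $\sign$) still coincide at $\mu_i=\mu_j$. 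That is a genuine clarification of a point the paper leaves implicit. (Incidentally, the row-equality argument already works for complex $\mu$'s since $S$ and $D$ are entire, so the appeal to the identity theorem is unnecessary, though harmless.)
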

	
	\begin{proof}
		We observe that it is clear that $\pf([K(\mu_i,\mu_j)]_{1\le i,j\le \ell})=0$ if any points coincide.
		The limit in question is taken over a subset of $\R^{\ell}_{> }$, where $|\Delta(\mu_1,\cdots,\mu_\ell)|=\Delta(\mu_1,\cdots,\mu_\ell)$. Thus our result would follow from Lemma \ref{lem:abstract-nonsense complex analysis} if we are able to show that the function \[f(\mu_1,\cdots,\mu_\ell)=\pf([K(\mu_i,\mu_j)]_{1\le i,j\le \ell}),\]
		defined on the interior of $\R^\ell_{>}$ extends to an analytic function on $\C^\ell$. We will use the notation $S,D,I$ for the functions corresponding to our choice of $K$. One may check in either case that $S$, and thus $D$, clearly has such an extension. However, the same cannot be said for $K(x,y)$ due only to the term $\frac{1}{2}\sign(y-x)$ in $I(x,y)$. 
		
		However, over the subset $\R^\ell_{>}$, this factor is constant in each matrix element. In particular, if we define
		\[\hat{K}_{ij}(x,y):=\begin{bmatrix}
			-\hat{I}_{ij}(x,y)& S(x,y)\\
			-S(y,x)& D(x,y)
		\end{bmatrix},\label{eqn:ignore-352}\]
		\[\hat{I}_{ij}(x,y)=\frac{1}{2}\sign(i-j)-\int_x^y S(x,z)dz,\]
		then for $(\mu_1,\cdots,\mu_\ell)\in \R^\ell_{>}$ we have that
		\[f(\mu_1,\cdots,\mu_\ell)=\pf([\hat{K}_{ij}(\mu_i,\mu_j)]_{1\le i,j\le \ell}).\]
		Moreover, it is clear that each $\hat{K}_{ij}$ extends to an analytic function on $\C^2$, and thus so does $f$, completing the proof.
	\end{proof}	
	
	From this we can now obtain the following result.
	
	\begin{prop}
		\label{prop:main prop}
		For any $\ell\in \N$ and $x\in (-(1+\tau),1+\tau)$, then we have that
		\[\E[|\det(A_N-\mu I)|^{\ell}]=\left(1+O(N^{-1})\right)e^{\frac{N\ell}{2} \left(\frac{\mu^2}{1+\tau}-1\right)}N^{\ell(\ell-1)/4}\frac{ (1+\tau)^{\ell/2}}{(1-\tau^2)^{\ell(\ell+1)/4}}\cal{C}_{\ell}\]
		where here $\cal{C}_{\ell}$ is the positive constant given by
		\[\cal{C}_{\ell}:=\frac{\left(4\pi\right)^{\ell/2}}{\prod_{i=1}^{\ell+1}\Gamma(i)}\partial_{\Delta_\ell}^+\pf([\cal{H}(\mu_i-\mu_j)]_{1\le i,j\le \ell})\big|_{\mu_1=\cdots=\mu_\ell=0}.\]
		Moreover, this bound is compact-uniform in the choice of $x\in (-1-\tau,1+\tau)$.
	\end{prop}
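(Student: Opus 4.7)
The plan is to extract the $\ell$-th moment by taking a fully coincident limit in the multi-point expression (\ref{eqn:determinant in terms of pfaffian}). Pick distinct $\mu_1>\mu_2>\cdots>\mu_\ell$ approaching $\mu$: the left-hand side is jointly continuous and converges to $\E|\det(A_N-\mu I)|^\ell$, while the right-hand side exhibits the $|\Delta(\mu_1,\dots,\mu_\ell)|^{-1}$ singularity that must be canceled by the vanishing of the Pfaffian on the diagonal. First, I would rescale the Vandermonde via $|\Delta(\mu_1,\dots,\mu_\ell)|=(N/(N+\ell))^{-\ell(\ell-1)/4}|\Delta(\tilde{\mu}_1,\dots,\tilde{\mu}_\ell)|$, a $1+O(N^{-1})$ change absorbable into the existing error, and then apply Lemma \ref{lem:ratio limit for Kn} with $K=K_{N+\ell}$ at the bulk point $\tilde{\mu}:=\mu\sqrt{N/(N+\ell)}$ to evaluate the limit of the Pfaffian-Vandermonde ratio as
\[\frac{1}{\prod_{i=1}^{\ell+1}\Gamma(i)}\partial_{\Delta_\ell}^+\pf([K_{N+\ell}(\tilde{\mu}_i,\tilde{\mu}_j)])\big|_{\tilde{\mu}_i=\tilde{\mu}}.\]

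Next, Corollary \ref{corr:error estimate for pfaffian} (applied with $N$ replaced by $N+\ell$) permits the replacement of $K_{N+\ell}(\tilde{\mu}_i,\tilde{\mu}_j)$ by $\cal{K}_\tau(\sqrt{N+\ell}(\tilde{\mu}_i-\tilde{\mu}_j))$ inside $\partial_{\Delta_\ell}^+$ with an exponentially small error: the operator $\partial_{\Delta_\ell}^+$ is a fixed linear combination of mixed partials of total order $\ell(\ell-1)/2$, and the Corollary controls precisely these partials of the $K_{N+\ell}-\cal{K}_\tau$ difference uniformly over compact bulk subsets. I would then change variables to $v_i=\sqrt{N+\ell}\,\tilde{\mu}_i$: the chain rule contributes a factor $(N+\ell)^{\ell(\ell-1)/4}$ to $\partial_{\Delta_\ell}^+$, and since $\pf([\cal{K}_\tau(v_i-v_j)])$ is translation-invariant in $v$, evaluation at $\tilde{\mu}_i=\tilde{\mu}$ is identical to evaluation at $v_i=0$. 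Combining this with the earlier $(N/(N+\ell))^{\ell(\ell-1)/4}$ yields exactly $N^{\ell(\ell-1)/4}$.

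Finally, (\ref{eqn:Kt and H pfaffian relation}) converts $\cal{K}_\tau$ to $\cal{H}$ via the substitution $u_i=v_i/\sqrt{1-\tau^2}$: the Pfaffian identity itself contributes $(1-\tau^2)^{-\ell/2}$ and the change of variables in $\partial_{\Delta_\ell}^+$ a further $(1-\tau^2)^{-\ell(\ell-1)/4}$, summing to $(1-\tau^2)^{-\ell(\ell+1)/4}$. Assembling the exponential prefactor at $\mu_i=\mu$, the $(4\pi(1+\tau))^{\ell/2}$ from (\ref{eqn:determinant in terms of pfaffian}), the $N^{\ell(\ell-1)/4}$ from the scalings, and the $(1-\tau^2)^{-\ell(\ell+1)/4}$ from the conjugation reproduces the claim, with $\cal{C}_\ell$ matching its stated definition. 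Compact-uniformity in $\mu$ is automatic because every estimate cited holds uniformly on compact subsets of the real bulk. The main obstacle is the careful bookkeeping of the three compounding scalings --- $\sqrt{N/(N+\ell)}$ between $\mu$ and $\tilde{\mu}$, $\sqrt{N+\ell}$ from the kernel argument, and $\sqrt{1-\tau^2}$ from the conjugation --- together with ensuring that the exponentially small error in Corollary \ref{corr:error estimate for pfaffian} survives passage through the differential operator $\partial_{\Delta_\ell}^+$, which is precisely why that Corollary was stated in terms of arbitrary mixed partials rather than pointwise values of the kernel.
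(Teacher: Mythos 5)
Your computation of the leading term follows the paper's own route: the coincident limit via Lemma \ref{lem:ratio limit for Kn}, the kernel replacement via Corollary \ref{corr:error estimate for pfaffian}, and the scaling bookkeeping (your split into $(N/(N+\ell))^{\ell(\ell-1)/4}$ from the Vandermonde rescaling times $(N+\ell)^{\ell(\ell-1)/4}$ from the chain rule is equivalent to the paper's one-step use of $\sqrt{N+\ell}\,\tilde{\mu}_i=\sqrt{N}\mu_i$ and homogeneity of $\partial_{\Delta_\ell}$). The exponents all check out.

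However, there is a genuine gap: you never establish that $\cal{C}_\ell$ is positive, or even nonzero. This is not a cosmetic omission. First, positivity is asserted in the statement itself. Second, and more substantively, Corollary \ref{corr:error estimate for pfaffian} delivers an \emph{additive} error $O(e^{-cN})$ on the Pfaffian (and hence, after $\partial_{\Delta_\ell}^+$, on the bracketed quantity), while the main term is $N^{\ell(\ell-1)/4}\cal{C}_\ell$ times constants. You can only fold that additive error into the multiplicative $(1+O(N^{-1}))$ appearing in the Proposition if the main term is bounded away from zero, i.e.\ if $\cal{C}_\ell\neq 0$; were $\cal{C}_\ell=0$, the claimed asymptotic would assert that a positive quantity equals $(1+O(N^{-1}))\cdot 0$. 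The paper closes this by noting $\cal{C}_\ell\ge 0$ from positivity of the left-hand side, then using $\E[|\det(A_N-\mu I)|^\ell]\ge \E[|\det(A_N-\mu I)|]^\ell$ to show $\cal{C}_\ell=0$ would force $\cal{C}_1=0$, and finally computing $\cal{C}_1=(4\pi)^{1/2}\pf(\cal{H}(0))/\Gamma(1)\Gamma(2)=\sqrt{\pi}>0$ directly. You should add an argument of this kind (or some other direct lower bound on $\cal{C}_\ell$) to complete the proof.
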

	\begin{proof}
		By noting that $\partial_{\Delta_\ell}$ is a homogeneous differential operator of order $\ell(\ell-1)/2$, and employing translation invariance, we see from (\ref{eqn:Kt and H pfaffian relation}) that
		\[
		\partial_{\Delta_\ell}^+\cal{K}_\tau(\sqrt{N}(\mu_i-\mu_j))\big|_{\mu_1=\cdots=\mu_\ell=\mu}=\frac{N^{\ell(\ell-1)/4}}{(1-\tau^2)^{\ell(\ell-1)/4}}\partial_{\Delta_\ell}^+\pf([\cal{H}(\mu_i-\mu_j)]_{1\le i,j\le \ell})\big|_{\mu_1=\cdots=\mu_\ell=0}.
		\]
		Thus applying this, Lemma \ref{lem:ratio limit for Kn} and Corollary \ref{corr:error estimate for pfaffian} to the expression (\ref{eqn:determinant in terms of pfaffian}), we see that there is small $c>0$ such that
		\[
		\E\left[\left|\det(A_{N}-\mu I)\right|^{\ell}\right]=e^{\frac{N\ell}{2} \left(\frac{\mu^2}{1+\tau}-1\right)}\left((1+O\left(N^{-1}\right))N^{\ell(\ell-1)/4}\frac{ (1+\tau)^{\ell/2}}{(1-\tau^2)^{\ell(\ell+1)/4}}\cal{C}_{\ell}+O\left(e^{-Nc}\right)\right).\label{eqn:ignore-len}
		\]
		Thus the desired statement would follow if we show that $\cal{C}_\ell$ is indeed positive. For this, we note that by (\ref{eqn:ignore-len}), and positivity of the left-hand side, we must have that $\cal{C}_\ell\ge 0$. Moreover, using the lower bound
		\[\E[|\det(A_N-\mu I)|^{\ell}]\ge \E[|\det(A_N-\mu I)|]^{\ell},\]
		we may apply (\ref{eqn:ignore-len}) to both sides, which easily shows that if $\cal{C}_\ell=0$ then $\cal{C}_1=0$. Thus we only need to check that $\cal{C}_1\neq 0$. However in this case
		\[\cal{C}_1=\frac{\left(4\pi\right)^{1/2}}{\Gamma(1)\Gamma(2)}\pf(\cal{H}(0))=\frac{(4\pi)^{1/2}}{\sqrt{2\pi}}=\sqrt{\pi}>0,\]
		which completes the proof.
	\end{proof}
	
	With Proposition \ref{prop:main prop}, Theorem \ref{theorem: moments of real bulk} follows from the following result.
	
	\begin{lem}
		\label{lem:identification of constant}
		For any $\ell\in \N$, we have that
		\[\cal{C}_{\ell}=\frac{(2\pi)^{\ell/2}}{2^{\ell(\ell-1)/4}\prod_{j=1}^{\ell}\Gamma(j/2)}.\]
	\end{lem}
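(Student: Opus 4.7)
The plan is to pin down $\cal{C}_\ell$ by specializing Proposition \ref{prop:main prop} to $\tau=0$, $\mu=0$ and matching the resulting asymptotic against an independent exact computation of $\E|\det A_{N,0}|^\ell$. At this specialization the factors $(1+\tau)^{\ell/2}$ and $(1-\tau^2)^{-\ell(\ell+1)/4}$ both collapse to $1$, and $\exp(\tfrac{N\ell}{2}(\tfrac{\mu^2}{1+\tau}-1))$ reduces to $e^{-N\ell/2}$, so the lemma becomes equivalent to showing
\[\E[|\det A_{N,0}|^\ell] = (1+O(N^{-1}))\, e^{-N\ell/2}\, N^{\ell(\ell-1)/4}\, \frac{(2\pi)^{\ell/2}}{2^{\ell(\ell-1)/4}\prod_{j=1}^\ell \Gamma(j/2)}.\]

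To obtain this independently, I would invoke the classical distributional identity $|\det B|^2 \disteq \prod_{i=1}^N Y_i$ for an $N\times N$ matrix $B$ with i.i.d.\ $N(0,1)$ entries, with $Y_i$ independent chi-squared random variables with $i$ degrees of freedom. This is precisely the content of (\ref{eqn:determinant of ginibre ratio}) (cf.\ \cite{nickrecent}). Since the density at $\tau=0$ gives $A_{N,0} = B/\sqrt{N}$, the moment factors as
\[\E[|\det A_{N,0}|^\ell] = N^{-N\ell/2}\prod_{i=1}^N \E[Y_i^{\ell/2}] = \frac{(2/N)^{N\ell/2}}{\prod_{j=1}^\ell \Gamma(j/2)}\prod_{i=N+1}^{N+\ell}\Gamma(i/2),\]
using $\E[Y_i^{\ell/2}] = 2^{\ell/2}\Gamma((i+\ell)/2)/\Gamma(i/2)$ together with the telescoping identity $\prod_{i=1}^N \Gamma((i+\ell)/2)/\Gamma(i/2) = \prod_{i=N+1}^{N+\ell}\Gamma(i/2)/\prod_{j=1}^\ell \Gamma(j/2)$.

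Finally, I would invoke the Stirling estimate of $\prod_{i=N+1}^{N+\ell}\Gamma(i/2)$ already derived inside the proof of Lemma \ref{lem:limit for D}, namely
\[\prod_{i=N+1}^{N+\ell}\Gamma(i/2) = (1+O(N^{-1}))\,(2\pi)^{\ell/2}\, 2^{-\ell(\ell-1)/4}\,(2e)^{-N\ell/2}\, N^{N\ell/2 + \ell(\ell-1)/4},\]
and observe that the prefactor $(2/N)^{N\ell/2}$ combines with $(2e)^{-N\ell/2} N^{N\ell/2}$ to leave exactly $e^{-N\ell/2}$. Substituting yields the displayed asymptotic for $\E[|\det A_{N,0}|^\ell]$ above, and comparing with the specialization of Proposition \ref{prop:main prop} then identifies $\cal{C}_\ell$ as claimed. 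There is no genuine obstacle: the only analytic input is the classical distributional identity for the Ginibre determinant, and the remainder is the same bookkeeping of powers of $2$, $N$, and $e$ already carried out for $D_{N,\ell}(\tau)$ in Lemma \ref{lem:limit for D}.
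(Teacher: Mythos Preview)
Your proposal is correct and follows essentially the same route as the paper: specialize Proposition \ref{prop:main prop} to $\tau=0$, $\mu=0$, invoke the exact formula (\ref{eqn:determinant of ginibre ratio}) for $\E|\det A_{N,0}|^\ell$, apply the Stirling estimate for $\prod_{i=N+1}^{N+\ell}\Gamma(i/2)$ from Lemma \ref{lem:limit for D}, and match. The only cosmetic difference is that you motivate (\ref{eqn:determinant of ginibre ratio}) via the chi-squared decomposition of $|\det B|^2$ whereas the paper cites its Selberg-integral origin; the resulting identity and all subsequent bookkeeping are identical.
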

	
	We note, given the combinatorial nature $\cal{C}_{\ell}$'s definition, we suspect that this identity has a direct proof. However, we have only been able to directly verify this result when $\ell$ is even, where the constant may be related to a certain matrix-integral, as discussed above. On the other hand, we are able to pursue an alternative route to show this by comparing this constant to the one appearing in the known computation of the absolute determinant of the real Ginibre ensemble. 
	
	\begin{proof}[Proof of Lemma \ref{lem:identification of constant}]
		
		It is known that the absolute determinant of the Ginibre ensemble may be expressed as a ratio of Selberg integrals. This gives exact formula (see for example 3.1 of \cite{detginibre})
		\[\E|\det(A_N)|^\ell=N^{-N\ell/2}2^{N\ell/2}\prod_{i=1}^{N}\frac{\Gamma((i+\ell)/2)}{\Gamma(i/2)}=N^{-N\ell/2}2^{N\ell/2}\frac{\prod_{i=N+1}^{N+\ell}\Gamma(i/2)}{\prod_{i=1}^{\ell}\Gamma(i/2)}.\label{eqn:determinant of ginibre ratio}\]
		Employing Stirling's approximation as in Lemma \ref{lem:limit for D} we see
		\[\prod_{i=N+1}^{N+\ell}\Gamma(i/2)=\left(1+O(N^{-1})\right)(2\pi)^{\ell/2}2^{-(\ell-1)\ell/4}(2e)^{-N\ell/2}N^{N\ell/2+\ell(\ell-1)/4}.\]
		In particular, we have that
		\[\E|\det(A_N)|^\ell=\left(1+O(N^{-1})\right)e^{-N\ell/2}N^{\ell(\ell-1)/4}\left(\frac{(2\pi)^{\ell/2}}{2^{\ell(\ell-1)/4}\prod_{i=1}^{\ell}\Gamma(i/2)}\right).\]
		Comparing this with the expression in Proposition \ref{prop:main prop} completes the proof.
	\end{proof}
	
	\section{The Pfaffian Kernel and The Proof of Lemma \ref{lem:error estimate for Pfaffian kernel} \label{section:pfaffian kernel}}
	
	What remains is to demonstrate Lemma \ref{lem:error estimate for Pfaffian kernel}. However, before we can proceed to this, we will of course need to give an explicit form for the kernel $K_N(x,y)$. Recalling (\ref{eqn:Pfaffian K-def}) above, all that is left to give the kernel is to give the function $S_N$. 
	
	In the case of $\tau\neq 0$ and $N$ even, a formula for this is given in \cite{forrester-elliptic}, which we recall. When $\tau\neq 0$ and $N$ is odd however, we need to translate the results of \cite{forrester-elliptic} into the method developed by \cite{sinclair}, which is somewhat lengthy. Luckily when $\tau=0$, these formulas are well-known \cite{borodin-elliptic,ginibre-odd}. The general formula is given in the following lemma.
	
	\begin{lem}
		\label{lem:exact form of pfaffian kernel}
		Let us assume that $\tau\in (0,1)$. Then for any $N\ge \ell$ the formulas (\ref{eqn:even Pfaffian formula}), (\ref{eqn:Pfaffian K-def}) and  (\ref{eqn:Pfaffian I and D def}) hold with $S_N$ given by
		\[S_N\left(\frac{x}{\sqrt{N}},\frac{y}{\sqrt{N}}\right)=\frac{e^{-\frac{x^2+y^2}{2(1+\tau)}}}{\sq{2\pi}}\sum_{k=0}^{N-2}\frac{1}{k!}C_k(x)C_k(y)+\frac{e^{-\frac{y^2}{2(1+\tau)}}}{2\sq{2\pi}(1+\tau)}\frac{C_{N-1}(y)\phi_{N-2}(x)}{(N-2)!},\label{eqn:S-definition}\]
		where here
		\[C_k(x):=
		\begin{cases}
			\left(\frac{\tau}{2}\right)^{k/2}H_k \left(\frac{x}{\sq{2\tau}}\right)\;\;\; \tau\neq 0\\
			x^k\;\;\; \tau=0
		\end{cases}	
		,\]
		$H_k$ denotes the $k$-th Hermite polynomial, and
		\[\phi_N(x):=\left(\frac{\sq{2\pi(1+\tau)}N!}{2^{N/2}\Gamma(N/2+1)}\right)-2\int_{x}^\infty e^{-\frac{t^2}{2(1+\tau)}}C_N(t)dt.\label{eqn:ignore-phi-def}\]
	\end{lem}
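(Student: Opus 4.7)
The plan is to derive the Pfaffian formula (\ref{eqn:even Pfaffian formula}) together with the explicit form of $S_N$ from the conditional density $P_{N,k,m}$ by the standard Pfaffian point process machinery: sum the density over admissible values of the real eigenvalue count, recognize the resulting sum as a Pfaffian via a de Bruijn-type identity, and then identify the entries of the inverse Gram matrix through a set of explicit skew-orthogonal polynomials. The argument splits naturally into the cases $N$ even and $N$ odd, with the odd case carrying the main technical load.

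For $N$ even, I would follow Forrester--Nagao \cite{forrester-elliptic}. The Vandermonde factor in $P_{N,k,m}$ written over all eigenvalues (real plus conjugate pairs) can be expanded as a determinant and antisymmetrized, and combined with the Gaussian weight $\exp(-Nx^2/(2(1+\tau)))$ on the real axis and the $\erfc$-weighted Gaussian on the upper half-plane; a de Bruijn integration then produces a single $N\times N$ Pfaffian. The induced skew inner product is bilinear and antisymmetric in polynomials, and the candidate polynomials $C_k$ defined in the statement, which are rescaled Hermite polynomials for $\tau\neq 0$, can be verified to be skew-orthogonal: this reduces, after the Gaussian change of variables that diagonalizes the elliptic weight, to the classical orthogonality of Hermite polynomials together with the parity-induced vanishing of the odd skew pairings. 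Inserting these polynomials into the standard reproducing-kernel expansion yields precisely the first sum in (\ref{eqn:S-definition}).

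For $N$ odd, the direct de Bruijn construction produces an odd-dimensional Pfaffian, and one must augment the system following Sinclair \cite{sinclair}. The augmentation appends an extra polynomial (of degree $N-1$) with an additional linear functional, giving an even-sized antisymmetric Gram matrix; the new matrix elements are precisely integrals of the $C_k$'s against $e^{-t^2/(2(1+\tau))}$ over the real line, which is where the object $\phi_N$ in (\ref{eqn:ignore-phi-def}) originates. Running skew Gram--Schmidt on the augmented basis produces the correction $C_{N-1}(y)\phi_{N-2}(x)$ with the normalization indicated, the constant in front of the integral in $\phi_N$ being fixed by the requirement that the augmented skew inner product pair correctly with the top polynomial. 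Finally, the property that $S_N$ extends to an entire function on $\C^2$, which is all that is needed for the applications in Section \ref{section:proof of main theorem}, follows by inspection: the $C_k$'s are polynomials, and $\phi_{N-2}$ is entire because the integral $\int_x^\infty e^{-t^2/(2(1+\tau))}C_{N-2}(t)dt$ can be evaluated as an entire function of $x$ (a Gaussian against a polynomial is entire, and the constant prefactor of $\phi_N$ merely absorbs the value at $x=-\infty$).

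The main obstacle I anticipate is bookkeeping rather than conceptual: tracking factorials, powers of $2$, $\pi$, and $(1+\tau)$ through the skew Gram--Schmidt procedure and through the rescaling $x\mapsto x/\sqrt{N}$ so that the normalizations in the two parities of $N$ line up with the unified formula (\ref{eqn:S-definition}). I would handle this by first fixing the even-$N$ case to match \cite{forrester-elliptic} exactly, then checking that Sinclair's augmentation produces no additional scalar beyond the explicit one in $\phi_{N-2}$, and finally cross-checking in the $\tau\to 0$ limit against the known $\tau=0$ formulas in \cite{borodin-elliptic,ginibre-odd}, which provides an independent sanity check on all of the constants.
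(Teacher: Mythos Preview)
Your proposal is correct and follows essentially the same route as the paper: the even-$N$ case is taken directly from \cite{forrester-elliptic}, the odd-$N$ case feeds the skew-orthogonal polynomials of \cite{forrester-elliptic} into Sinclair's augmentation \cite{sinclair} and then simplifies (the paper does this via telescoping sums and the Legendre duplication formula, plus a short conjugation argument to reconcile the two references' conventions for $K_N$) to reach the unified expression (\ref{eqn:S-definition}), and the $\tau=0$ cases are read off from \cite{borodin-elliptic,ginibre-odd}. One small correction worth flagging before you execute: the skew-orthogonal family for this weight is not $\{C_k\}$ itself but the pairs $\big(C_{2i},\, C_{2i+1}-2iC_{2i-1}\big)$ times the Gaussian weight (Theorem~1 of \cite{forrester-elliptic}), so your ``verify skew-orthogonality of the $C_k$'' step will need that adjustment, though it does not alter the plan.
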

	\begin{proof}
		It is convenient to define
		\[\bar{S}_N\left(x,y\right)=S_N\left(\frac{x}{\sqrt{N}},\frac{y}{\sqrt{N}}\right),\]
		and similarly for $\bar{K}_N$, $\bar{D}_N$, and etc. We now proceed to the first case.
		
		\textbf{Case: $\tau\neq 0$ and $N$ even } 
		This case will follow almost immediately from \cite{forrester-elliptic}. Referencing \cite{forrester-elliptic} repeating, the expression for the correlation function (after rescaling) in terms of $K_N$ is their (4.3), our (\ref{eqn:Pfaffian K-def}) and (\ref{eqn:Pfaffian I and D def}) is their (4.4) and (4.6), and finally their definition of $\Sb_N$ and $\phi_N$ is their (5.24) and (5.16), as we have that our $\phi_N$ is their $\Phi_N$ when $N$ is even.
		
		\textbf{Case: $\tau\neq 0$ and $N$ odd} This will consist of employing the formulas of \cite{forrester-elliptic}, to the set-up used to treat odd correlation functions in \cite{sinclair}. In particular, we see that the eigenvalue process for $\sqrt{N}GEE(\tau,N)$ is of the form considered in (1.4) of \cite{sinclair} with
		\[\bar{w}(z)=e^{-\frac{z^2}{2(1+\tau)}}\sq{\erfc \left(|\Im(z)|\sq{\frac{2}{1-\tau^2}} \right)}.\]
		In particular, our expression for $\bar{K}_N$ will follow from the formulas they provide in their Section 7, using the family of monic skew-orthogonal polynomials found in Theorem 1 of \cite{forrester-elliptic}. However, due to a number of differences in notation between \cite{forrester-elliptic} and \cite{sinclair}, we will provide some expressions for how the functions defined in \cite{forrester-elliptic} equate to the functions defined in \cite{sinclair}. 
		
		In particular, by Theorem 1 of \cite{forrester-elliptic} we may apply Section 7 of \cite{sinclair} with
		\[\tilde{q}_{2i}(x)=e^{-\frac{x^2}{2(1+\tau)}}C_{2i}(x),\;\;\; \tilde{q}_{2i+1}(x)=e^{-\frac{x^2}{2(1+\tau)}}(C_{2i+1}(x)-2iC_{2i-1}(x)).\]
		\[2\tilde{\epsilon}q_{i}(x)=\int \sign(x-t)\tilde{q}_i(t)dt=:\Phi_i(x),\label{eqn:ignore-luna}\]
		where this $\Phi_i$ coincides with the function defined in \cite{forrester-elliptic}. Employing their Theorem 1 of \cite{forrester-elliptic} again and the formula (see \cite{AS})
		\[\int e^{-x^2}H_{2m}(xy)dx=2^{2m}\Gamma\left(m+1/2\right)(y^2-1)^m,\]
		we see that the constants in the expression for $\Sb_N$ in \cite{sinclair} (defined on pg. 29) are given by
		\[r_i=-2\sq{2\pi}(1+\tau)(2i)!,\;\; s_{2i+1}=0, \;\; s_{2i}=\sq{2(1+\tau)}2^{i}\Gamma\left(i+1/2\right).\]
		To avoid confusion, there is a sign difference in the definition of $r_i$ in \cite{sinclair} and \cite{forrester-elliptic}, which occurs due to the sign difference between their skew-symmetric inner products. With these relations, the first expression on pg. 31 of \cite{sinclair} reads 
		\footnote{Note as $s_{2j+1}=0$, terms in both the second and third sum in \cite{sinclair} vanish, which we have grouped into the first sum of $II$.}
		\[\Sb_N(y,x)=I+II,\]
		\[I=\sum_{j=0}^{(N-3)/2}\frac{2}{r_j}\left(\tilde{q}_{2j}(y)\tilde{\epsilon}q_{2j+1}(x)-\tilde{q}_{2j+1}(y)\tilde{\epsilon}q_{2j}(x)\right),\]
		\[II=\sum_{j=0}^{(N-3)/2}\frac{2 s_{2j}}{r_j s_{N-1}}\left(\tilde{q}_{2j+1}(y)\tilde{\epsilon}q_{N-1}(x)-\tilde{q}_{N-1}(y)\tilde{\epsilon}q_{2j+1}(x)\right)+\frac{\tilde{q}_{N-1}(y)}{s_{N-1}}.\]
		Note this formula is for $\Sb_N(y,x)$ and indeed, our first step will be to show that (\ref{eqn:S-definition}) holds for $\Sb_N(y,x)$ not $\Sb_N(x,y)$. We will return below to this point when we relate the form of $\bar{K}_N$ given in \cite{sinclair} to our $\bar{K}_N$ above.
		
		Before this though, we simplify $\Sb_N(y,x)$. We note that with the above relations, $I$ coincides with (4.5) of \cite{forrester-elliptic} after making the replacement $N\mapsto N-1$. That is, $I=\bar{S}_{N-1}(y,x)$ except in the form \cite{forrester-elliptic} first introduces it, and so is treated by the previous case.
		
		Thus we are left to study $II$. For this, we note by the Legendre duplication formula that
		\[\frac{2s_{2i}}{s_{N-1}r_i}=\frac{-1}{2^{N/2}\Gamma(N/2)(1+\tau)}\frac{2^{i+1/2}\Gamma(j+1/2)}{2\sqrt{2\pi}\Gamma(2i+1)}=\frac{-1}{2^{N/2}\Gamma(N/2)(1+\tau)}\frac{1}{2^{i}{i!}}.\]
		We may thus evaluate the first term in the summation in $II$ as
		\[\sum_{j=0}^{(N-3)/2}\frac{2s_{2j}}{s_{N-1}r_j}\tilde{\epsilon}q_{N-1}(x)\tilde{q}_{2j+1}(y)=\]
		\[-\frac{\Phi_{N-1}(x)}{2^{N/2}\Gamma(N/2)(1+\tau)}\sum_{j=0}^{(N-3)/2}\frac{e^{-\frac{y^2}{2(1+\tau)}}}{2}\left(\frac{1}{j!2^j}C_{2j+1}(y)-\frac{j}{j!2^{j-1}}C_{2j-1}(y)\right)=\]
		\[-\frac{e^{-\frac{y^2}{2(1+\tau)}}}{2^{N/2}\Gamma(N/2)(1+\tau)}\frac{\Phi_{N-1}(x)C_{N-2}(y)}{((N-3)/2)!2^{(N-1)/2}}=-\frac{e^{-\frac{y^2}{2(1+\tau)}}C_{N-2}(y)\Phi_{N-1}(x)}{2\sqrt{2\pi}(1+\tau)(N-2)!},\]
		where in the last step we have again used Legendre's duplication formula to conclude that
		\[
		2^{N-2}\Gamma((N-1)/2)\Gamma(N/2)=\sqrt{\pi}\Gamma(N-1)=\sqrt{\pi}(N-2)!.
		\]
		Similarly we may evaluate the second term in the sum as
		\[-\sum_{j=0}^{(N-3)/2}\frac{2s_{2j}}{s_{N-1}r_j}\tilde{q}_{N-1}(y)\tilde{\epsilon}q_{2j+1}(x)=\]
		\[\frac{e^{-\frac{y^2}{2(1+\tau)}}C_{N-1}(y)}{2^{N/2}\Gamma(N/2)(1+\tau)}\sum_{j=0}^{(N-3)/2}\left(\frac{\Phi_{2j+1}(t)}{j!2^j}-\frac{j\Phi_{2j-1}(t)}{j!2^{j-1}}\right)=\]
		\[\frac{e^{-\frac{y^2}{2(1+\tau)}}C_{N-1}(y)}{2^{N/2}\Gamma(N/2)(1+\tau)}\frac{\Phi_{N-2}(t)}{((N-3)/2)!2^{(N-1)/2}}=\frac{e^{-\frac{y^2}{2(1+\tau)}}C_{N-1}(y)\Phi_{N-2}(x)}{2\sqrt{2\pi}(1+\tau)(N-2)!}.\]
		Finally noting that	\[\frac{\tilde{q}_{N-1}(y)}{s_{N-1}}=\frac{e^{-\frac{y^2}{2(1+\tau)}}C_{N-1}(y)}{2^{N/2}\Gamma(N/2)\sqrt{1+\tau}},\]
		we see that 
		\[II=-\frac{e^{-\frac{y^2}{2(1+\tau)}}C_{N-2}(y)\Phi_{N-1}(x)}{2\sqrt{2\pi}(N-2)!}+\frac{e^{-\frac{y^2}{2(1+\tau)}}C_{N-1}(y)\Phi_{N-2}(x)}{2\sqrt{2\pi}(1+\tau)(N-2)!}+\frac{e^{-\frac{y^2}{2(1+\tau)}}C_{N-1}(y)}{2^{N/2}\Gamma(N/2)\sqrt{1+\tau}}.\]
		We also note that by (4.40) in \cite{forrester-elliptic}, we have that
		\[\Phi_{N-1}(x)-(N-2)\Phi_{N-3}(x)=-2(1+\tau)e^{-\frac{x^2}{2(1+\tau)}}C_{N-2}(x),\]
		so that in particular, when combined with the last term of $\bar{S}_{N-1}(y,x)$, one gets
		\[\frac{e^{-\frac{y^2}{2(1+\tau)}}C_{N-2}(y)\Phi_{N-3}(x)}{2\sq{2\pi}(1+\tau)(N-3)!}-\frac{e^{-\frac{y^2}{2(1+\tau)}}C_{N-2}(y)\Phi_{N-1}(x)}{2\sqrt{2\pi}(1+\tau)(N-2)!}=\frac{e^{-\frac{x^2+y^2}{2(1+\tau)}}C_{N-2}(x)C_{N-2}(y)}{\sqrt{2\pi}(N-2)!}.\]
		Moreover, when $N$ is odd we have that
		\[\Phi_{N}(x)=-2\int_{x}^\infty e^{-\frac{t^2}{2(1+\tau)}}C_N(t)dt,\]
		so that the last two terms of $II$ satisfy
		\[\frac{e^{-\frac{y^2}{2(1+\tau)}}C_{N-1}(y)\Phi_{N-2}(x)}{2\sqrt{2\pi}(1+\tau)(N-2)!}+\frac{e^{-\frac{y^2}{2(1+\tau)}}C_{N-1}(y)}{2^{N/2}\Gamma(N/2)\sqrt{1+\tau}}=\frac{e^{-\frac{y^2}{2(1+\tau)}}}{2\sq{2\pi}(1+\tau)}\frac{C_{N-1}(y)\phi_{N-2}(x)}{(N-2)!}.\]
		Together with the relation $I=\bar{S}_{N-1}(y,x)$, this establishes (\ref{eqn:S-definition}) with $\bar{S}_N(y,x)$ instead of $\bar{S}_N(x,y)$ on the left hand side. 
		
		We now deal with relating their $\bar{K}_N$ to ours. On pg. 27 of \cite{sinclair}, they give an expression for $\bar{K}_N(y,x)$ equivalent to 
		\[\bar{K}_N(y,x)=\begin{bmatrix}
			D\Sb_N(y,x)& \Sb_N(y,x)\\
			-\Sb_N(x,y)& \Sb_NI(y,x)+\frac{1}{2}\sign(x-y)\\
		\end{bmatrix}.\label{eqn:odd-temp-2}\]
		We note that it is clear that $D\Sb_N(y,x)=\partial_x \Sb_N(y,x)$ and $\Sb_N(y,x)=\partial_y \Sb_NI(y,x)$. In particular, the difference $\Sb_NI(y,x)-\int_x^y \Sb_N(z,x)dz$ is a function of only $x$, as its partial derivative in $y$ is zero, but as they both vanish on $y=x$ (note $\Sb_NI$ and $D\Sb_N$ are anti-symmetric), we have that $\Sb_NI(y,x)=\int_x^y \Sb_N(z,x)dz$, as desired. This shows the matrix elements satisfy the stated relations (\ref{eqn:Pfaffian I and D def}).
		
		Finally, we deal with the difference of the forms of $\bar{K}_N$ in (\ref{eqn:odd-temp-2}) and (\ref{eqn:Pfaffian K-def}), as well as the the switching of the order of $x$ and $y$, whose effects cancel each other out. We note that
		\[-\begin{bmatrix}0 & 1\\ 1 & 0 \end{bmatrix}\begin{bmatrix}
			D\Sb_N(y,x)& \Sb_N(y,x)\\
			-\Sb_N(y,x)& \Sb_NI(y,x)+\frac{1}{2}\sign(x-y)\\
		\end{bmatrix}\begin{bmatrix}0 & 1\\ 1 & 0 \end{bmatrix}=\]
		\[\begin{bmatrix}
			\frac{1}{2}\sign(y-x)+\Sb_NI(x,y)& \Sb_N(x,y)\\
			-\Sb_N(y,x)& D\Sb_N(x,y)\\
		\end{bmatrix}.\]
		As we have that
		\[\det\left(-\begin{bmatrix}0 & 1\\ 1 & 0 \end{bmatrix}\right)=1,\]
		operating on the Pfaffian kernel in this way doesn't change the correlation functions, and as applying this to (\ref{eqn:odd-temp-2}) yields (\ref{eqn:Pfaffian K-def}) with switched order, this completes the proof.
		
		\textbf{Case: $\tau=0$ and $N$ even}
		This essentially follows from the case of $\tau\neq 0$ by sending $\tau\to 0$ and using that $C_n(z)\to z^n$. Indeed, as noted in \cite{forrester-elliptic}, their formulas (particularly their (4.2)-(4.6)) which give $\bar{K}_N$ in terms of $\bar{S}_N$, hold regardless of $\tau$. What is left is to simplify their initial expression for $\bar{S}_N$ to ours, using the skew-orthogonal polynomials in their Corollary 2.  However, such a simplification for $\bar{S}_N$ is given in Theorem 8 of \cite{borodin-elliptic}. Indeed, it is clear that the first terms in both our and their definitions of $\bar{S}_N$ coincide. For second term, we need to show that 
		\[\phi_{N}(x):=\left(\frac{\sq{2\pi}N!}{2^{N/2}\Gamma(N/2+1)}\right)-2\int_x^\infty t^{N}e^{-\frac{t^2}{2}} dt=2\int_0^x t^Ne^{\frac{-t^2}{2}}dt.\label{eqn:ignore-2983094823-even}\]
		which follows from the Gaussian moment identity
		\[\int_{-\infty}^{\infty} e^{-\frac{t^2}{2}}|t|^{N} dt=\left(\frac{\sq{2\pi}N!}{2^{N/2}\Gamma(N/2+1)}\right).\label{eqn:ignore-gmi}\]
		
		\textbf{Case: $\tau=0$ and $N$ odd}
		In this case, the odd formula formula for $\bar{S}_N$ is also derived from the above skew-orthogonal polynomials in Proposition 2.2 of \cite{ginibre-odd}, using the methods of \cite{sinclair}, and a similar argument as in the $\tau\neq 0$ case establishes that $\bar{K}_N$ is of the desired form.
	\end{proof}
	
	We now proceed to the proof of Lemma \ref{lem:error estimate for Pfaffian kernel}, beginning with the case of $\tau \neq 0$. Our first step will be the following global bound on the Hermite polynomials.
	
	\begin{lem}
		\label{lem:technical:Hermite upper bound}
		There is $C>0$ such that for all $k\ge 1$ and $x\in \R$
		\[|H_k(\sq{2k}x)|\le
		C(k!)^{1/2}2^{k/2}e^{k s(x)},\]
		where here
		\[s(x):=x^2-I(|x|\ge 1)\left(|x|\sq{x^2-1}-\log(\sq{x^2-1}+|x|)\right)=\]
		\[\frac{1}{2}+\log(2)+\frac{2}{\pi}\int_{-1}^1\sq{1-y^2}\log(|x-y|)dy.\]
	\end{lem}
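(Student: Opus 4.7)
The plan is to perform a uniform saddle-point analysis of the Cauchy-type contour integral produced by the Hermite generating function. Starting from $e^{2zt-t^2}=\sum_k H_k(z)t^k/k!$ and substituting $z=\sqrt{2k}x$, $t=\sqrt{k/2}\,u$ yields
\[H_k(\sqrt{2k}x)=\frac{k!\,2^{k/2}}{k^{k/2}}\cdot \frac{1}{2\pi i}\oint \frac{e^{k\Phi_x(u)}}{u}\,du,\qquad \Phi_x(u):=2xu-\tfrac{1}{2}u^2-\log u.\]
The critical-point equation $\Phi_x'(u)=2x-u-u^{-1}=0$ has roots $u_\pm(x)=x\pm\sqrt{x^2-1}$, and a direct computation gives $\Re\Phi_x(u_\pm)=s(x)+\tfrac{1}{2}$ in both the elliptic ($|x|\le 1$) and hyperbolic ($|x|>1$) regimes, which sets up the identification of the exponential rate.

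For $|x|\le 1$ I would integrate over the unit circle $u=e^{i\theta}$; then
\[\Re\Phi_x(e^{i\theta})=2x\cos\theta-\tfrac{1}{2}\cos(2\theta)=x^2+\tfrac{1}{2}-(\cos\theta-x)^2\le s(x)+\tfrac{1}{2},\]
with equality exactly at the saddles $\cos\theta=x$. For $|x|>1$, taking $x>0$ by the parity $H_k(-z)=(-1)^k H_k(z)$, I would use the circle $|u|=u_-(x)\in(0,1)$; a monotonicity check in $\theta$ places the unique maximum of $\Re\Phi_x$ on this circle at $u=u_-(x)$, again with value $s(x)+\tfrac{1}{2}$. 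In both cases, Laplace's method near the (possibly degenerate) maximum yields a uniform estimate
\[\bigg|\oint\frac{e^{k\Phi_x(u)}}{u}du\bigg|\le C\,k^{-\alpha}e^{k(s(x)+1/2)}\]
for a fixed $\alpha>0$ independent of $k$ and $x$; Gaussian scaling gives $\alpha=\tfrac{1}{2}$ away from $|x|=1$, while at $|x|=1$ the two saddles merge cubically and an Airy-type localization still yields $\alpha=\tfrac{1}{3}$. Combining this with Stirling in the form $k!/k^{k/2}\cdot e^{k/2}=(1+O(k^{-1}))(2\pi k)^{1/4}(k!)^{1/2}$ yields
\[|H_k(\sqrt{2k}x)|\le C\,(k!)^{1/2}2^{k/2}k^{1/4-\alpha}e^{ks(x)}\le C'\,(k!)^{1/2}2^{k/2}e^{ks(x)},\]
since $\alpha\ge \tfrac{1}{3}>\tfrac{1}{4}$.

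The main obstacle will be establishing the uniformity of the Laplace estimate across all $x\in \mathbb{R}$: specifically the turning point $|x|=1$, where the two saddles coalesce and an Airy-type analysis on a $k^{-1/3}$-neighborhood is needed, and the regime $|x|\to\infty$, where $u_-(x)\sim 1/(2x)\to 0$ approaches the logarithmic singularity of $\Phi_x$. In the latter regime one checks that $\Phi_x''(u_-)=u_-^{-2}-1\to\infty$, so the effective Gaussian width actually shrinks and the estimate improves; the tail of the circle can be controlled by direct exponential comparison using the uniform strict inequality $\Re\Phi_x\le s(x)+\tfrac{1}{2}-c$ off a small arc around the maximum, where the gap $c>0$ is controlled from the explicit form of $\Re\Phi_x$. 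Finally, the alternative representation $s(x)=\tfrac{1}{2}+\log 2+\tfrac{2}{\pi}\int_{-1}^1\sqrt{1-y^2}\log|x-y|\,dy$ is the classical logarithmic-potential identity for the semicircle law; it follows by differentiating both sides in $x$ and invoking the Hilbert-transform identity $\tfrac{2}{\pi}\int_{-1}^1\sqrt{1-y^2}(x-y)^{-1}dy=2x-2\sign(x)\sqrt{x^2-1}\,I(|x|\ge 1)$, together with matching at $x=0$, where the integral reduces to the standard Beta-type evaluation $\tfrac{2}{\pi}\int_{-1}^1\sqrt{1-y^2}\log|y|\,dy=-\log 2-\tfrac{1}{2}$.
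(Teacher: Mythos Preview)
Your approach is sound in outline but takes a substantially longer route than the paper. You are essentially re-deriving uniform Plancherel--Rotach asymptotics from the generating-function contour integral, whereas the paper proves the lemma in a few lines by combining two off-the-shelf results. For $|x|\le 1+k^{-1}$ it invokes Cram\'er's inequality $|H_k(\sqrt{2k}x)|\le (k!)^{1/2}2^{k/2}e^{kx^2}$; since $s(x)=x^2$ for $|x|\le 1$ and $x^2-s(x)=O(||x|-1|^{3/2})$ near $|x|=1$, this already gives the bound with a uniformly bounded constant in that range. For $|x|\ge 1+k^{-1}$ it cites the Plancherel--Rotach expansion in the exponential region, which comes with the correct rate $e^{ks(x)}$ and an explicit subexponential prefactor of order $(x^2-1)^{-1/4}$; at distance $\ge k^{-1}$ from the turning point this is $O(k^{1/4})$, and Stirling's formula absorbs it.

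The main practical difference is how the turning point $|x|=1$ is handled. You propose a uniform Airy analysis there; this is correct but is precisely the delicate step, since the naive Gaussian constant $(1-x^2)^{-1/2}$ on your unit-circle contour blows up as $|x|\to 1$, so one genuinely needs a Chester--Friedman--Ursell-type uniform estimate over a \emph{fixed} neighborhood of $|x|=1$ (not a shrinking one) to secure $\alpha\ge 1/3$ with a constant independent of $x$. The paper sidesteps this entirely: Cram\'er's inequality is a one-line global bound that happens to be sharp enough inside $|x|\le 1$ and within a $k^{-1}$-window of the turning point, so no Airy machinery is needed. Your route would be self-contained and would recover finer prefactor information, but for the lemma as stated the paper's shortcut is much more economical.
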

	\begin{proof}
		Cram\'{e}r's Inequality (see \cite{szego}) states that for any $x\in \R$ and $k\ge 0$
		\[|H_k(\sq{2k}x)|\le (k!)^{1/2}2^{k/2}e^{k x^2}.\]
		As $s(x)-x^2=0$ for $|x|<1$ and $s(x)-x^2=O(||x|-1|)$ elsewhere, this gives the desired claim for $|x|<1+k^{-1}$. For the remaining regions, we need to employ the Plancherel-Rotach asymptotics. In particular, we may employ the asymptotic expansions in Theorem 1 and Corollary 2 of \cite{diego-pr-asym} to find $C$ such that for $|x|>1$
		\[|H_k(\sq{2k}x)|\le Ck^{k/2}e^{-k/2}2^{k/2}e^{ks(x)}\sq{\left(1-|x|/\sq{x^2-1}\right)}\label{eqn:ignore-121}\]
		When $|x|\ge 1+k^{-1}$, the last term is of order $O(k^{1/4})$, so employing (\ref{eqn:ignore-121}) and Stirling's approximation completes the proof.
	\end{proof}
	
	From this we obtain the immediate corollary.
	
	\begin{corr}
		\label{corr:technical:orthogonal polynomial upper bound}
		There is $C>0$ such that for all $k\ge 1$ and $x\in \R$
		\[|e^{-\frac{kx^2}{2(1+\tau)}}C_k(\sq{k}x)|\le C (k!)^{1/2}e^{k h_\tau(x)}.\label{eqn:ignore-204}\]
		where here
		\[h_\tau(x):=-\frac{x^2}{2(1+\tau)}+\frac{1}{2}\log(\tau)+s\left(\frac{x}{2\tau}\right).\]
	\end{corr}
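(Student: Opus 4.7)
The proof is essentially a direct substitution, so the plan is short.

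First I would unfold the definition $C_k(x) = (\tau/2)^{k/2} H_k(x/\sqrt{2\tau})$ (assuming $\tau \in (0,1)$, as the statement requires $\log(\tau)$ to be finite), and rewrite the argument so that Lemma \ref{lem:technical:Hermite upper bound} applies. Specifically,
\[
C_k(\sqrt{k}\,x) = \Bigl(\frac{\tau}{2}\Bigr)^{k/2} H_k\!\Bigl(\sqrt{2k}\cdot\frac{x}{2\tau}\Bigr),
\]
which exactly matches the form $H_k(\sqrt{2k}\,y)$ with $y = x/(2\tau)$.

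Next, applying Lemma \ref{lem:technical:Hermite upper bound} at the point $y = x/(2\tau)$ yields
\[
\bigl|H_k\!\bigl(\sqrt{2k}\,x/(2\tau)\bigr)\bigr| \le C(k!)^{1/2} 2^{k/2} e^{k s(x/(2\tau))}.
\]
Combining the prefactors gives $(\tau/2)^{k/2}\cdot 2^{k/2} = \tau^{k/2} = e^{(k/2)\log\tau}$, and multiplying by the Gaussian weight $e^{-kx^2/(2(1+\tau))}$ produces exactly $e^{k h_\tau(x)}$ in the exponent. This recovers the stated bound, with the same constant $C$ as in Lemma \ref{lem:technical:Hermite upper bound}.

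There is no real obstacle here: the content of the corollary is just bookkeeping for the rescaling $x \mapsto \sqrt{k}\,x$ and the change of variable $x \mapsto x/\sqrt{2\tau}$ in the Hermite argument. The only thing worth noting is that the term $\tfrac{1}{2}\log(\tau)$ in $h_\tau$ comes exclusively from the $(\tau/2)^{k/2}\cdot 2^{k/2}$ cancellation, and the $-x^2/(2(1+\tau))$ term is already present before any estimation; all nontrivial behavior is absorbed into $s(x/(2\tau))$ via the Hermite bound.
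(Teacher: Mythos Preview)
Your approach is exactly what the paper intends: it calls this an ``immediate corollary'' of Lemma~\ref{lem:technical:Hermite upper bound}, and the argument is precisely the substitution you describe.

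There is, however, an arithmetic slip in your rescaling. From $C_k(\sqrt{k}\,x)=(\tau/2)^{k/2}H_k\bigl(\sqrt{k}\,x/\sqrt{2\tau}\bigr)$ one has
\[
\frac{\sqrt{k}\,x}{\sqrt{2\tau}}=\sqrt{2k}\cdot\frac{x}{2\sqrt{\tau}},
\]
so the correct variable is $y=x/(2\sqrt{\tau})$, not $y=x/(2\tau)$. Carrying this through yields $h_\tau(x)=-\tfrac{x^2}{2(1+\tau)}+\tfrac12\log\tau+s\bigl(x/(2\sqrt{\tau})\bigr)$. This is consistent with the paper's subsequent claim that $h_\tau$ is maximized at $\pm(1+\tau)$ with value zero (which fails for $s(x/(2\tau))$ but holds for $s(x/(2\sqrt{\tau}))$), so the discrepancy reflects a typo in the stated formula rather than a flaw in the method. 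With this correction your proof is complete and identical in spirit to the paper's.
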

	
	It is easily verified that $h_\tau(x)$ is strictly maximized at the points $\{\pm(1+\tau)\}$, where $h_\tau(\pm(1+\tau))=0$. From this we see that for $\epsilon>0$ we may choose $c,C>0$ such that if both $|x-(1+\tau)|>\epsilon$ and $|x+(1+\tau)|>\epsilon$
	\[|e^{-\frac{kx^2}{2(1+\tau)}}C_k(\sq{k}x)|\le C (k!)^{1/2}e^{-ck}.\label{eqn:ignore-214}\]	
	
	\begin{proof}[Proof of Lemma \ref{lem:error estimate for Pfaffian kernel}]
		\textbf{Case: $\tau\neq 0$}
		
		Our first step is to employ Mehler's formula (see \cite{AS}) which shows that for $|\rho|<1$,
		\[\sum_{k=0}^{\infty}\frac{\rho^k}{2^k k!}H_{k}(x)H_k(y)=\frac{1}{\sq{1-\rho^2}}\exp \left(-\frac{\rho^2(x^2+y^2)-2\rho xy}{(1-\rho^2)}\right).\label{eqn:mehler}\]
		In particular, we observe that
		\[\frac{e^{-\frac{x^2+y^2}{2(1+\tau)}}}{\sq{2\pi}}\sum_{k=0}^{\infty}\frac{1}{k!}C_k(x)C_k(y)=\cal{S}_{\tau}(x,y).\]
		Thus, we may write
		\[S_N(x,y)-\cal{S}_{\tau}(\sq{N}x,\sq{N}y)=-\frac{e^{-\frac{N\left(x^2+y^2\right)}{2(1+\tau)}}}{\sq{2\pi}}\sum_{k=N-1}^{\infty}\frac{1}{k!}C_k(\sq{N}x)C_k(\sq{N}y)+\]\[\frac{e^{-N\frac{y^2}{2(1+\tau)}}}{2\sq{2\pi}(1+\tau)}\frac{C_{N-1}(\sq{N}y)\phi_{N-2}(\sq{N}x)}{(N-2)!}.\label{eqn:ignore-203}\]
		For the remainder of the proof we will fix a choice of $\epsilon>0$, and consider $x,y\in (-1-\tau+\epsilon,1+\tau-\epsilon)$ unless otherwise stated. We also allow the constants $C,c>0$ (which are dependent on $\epsilon$) to change from line to line. From (\ref{eqn:ignore-214}) we see that
		\[\l|e^{-\frac{N\left(x^2+y^2\right)}{2(1+\tau)}}\sum_{k=N-1}^{\infty}\frac{1}{k!}C_k(\sq{N}x)C_k(\sq{N}y)\r|\le C\sum_{k=N-1}^{\infty}e^{-2kc}\le Ce^{-Nc}.\]
		We now focus our attention on the second term in (\ref{eqn:ignore-203}). We note that employing (\ref{eqn:ignore-204}),
		\[\left|\int_x^\infty e^{-t^2/2(1+\tau)}C_N(t)dt\right|\le C(N!)^{1/2}.\]
		Further employing Stirling's formula to the constant term in (\ref{eqn:ignore-phi-def}) we see that 
		\[|\phi_N(x)|\le C(N!)^{1/2}. \label{eqn:ignore-215}\] 
		Combining these, we conclude that
		\[\left|e^{-N\frac{y^2}{2(1+\tau)}}\frac{C_{N-1}(\sq{N}y)\phi_{N-2}(\sq{N}x)}{(N-2)!}\right|\le Ce^{-Nc}.\label{eqn:ignore-129823748728}\]
		Altogether, this establishes that
		\[|S_N(x,y)-\cal{S}_N(\sqrt{N}x,\sqrt{N}y)|\le Ce^{-Nc}.\]
		To understand the derivatives, we may repeatedly employ the recurrence relation
		\[H'_k(x)=2xH_k(x)-H_{k+1}(x),\]
		to obtain from (\ref{eqn:ignore-214}) that there is $C>0$ such that for $x\in \R$
		\[\l|\frac{d^\ell}{dx^\ell}\left(e^{-\frac{k}{2(1+\tau)x^2}}C_k(\sq{k}x)\right)\r|\le C k^\ell (k!)^{1/2}e^{k h_\tau(x)}.\]
		Repeating the above analysis, we see that we have for any $n,m\ge 0$
		\[\l|\frac{d^n}{dx^n}\frac{d^m}{dy^m}\left(S_N(x,y)-\cal{S}(\sq{N}x,\sq{N}y)\right)\r|\le Ce^{-Nc}.\]
		Employing (\ref{eqn:Pfaffian I and D def}), (\ref{eqn:Pfaffian pure D}) and (\ref{eqn:Pfaffian pure I}) we see that same claim holds for $D_N(x,y)-\cal{D}(\sq{N}x,\sq{N}y)$ and $I_N(x,y)-\cal{I}(\sq{N}x,\sq{N}y)$, which suffices to demonstrate the desired claim. 
		
		\textbf{Case: $\tau= 0$} When $\tau=0$, instead of Mehler's formula, we are simply using the relation
		\[e^{-\frac{x^2+y^2}{2}}\sum_{k=0}^\infty\frac{x^ky^k}{k!}=\exp\left(-\frac{(x-y)^2}{2}\right).\]
		Moreover, the modification appropriate modification of the inequality in Corollary \ref{corr:technical:orthogonal polynomial upper bound} is the stronger equality statement: For all $k\ge 1$ and $x\in \R$ we have that
		\[e^{-kx^2/2}|\left(\sqrt{2k}x\right)^k|= k^{k/2}2^{k/2}e^{kh_0(x)},\]
		where here
		\[h_0(x)=-\frac{x^2}{2}+\log(|x|).\]

		As before, this $h_0$ is strictly maximized at the points $x=\pm 1$, where it takes the value zero. So for $\epsilon>0$, we conclude there are $C,c>0$ such that if $|x-1|>\epsilon$ and $|x+1|>\epsilon$ we have that
		\[|\left(e^{-kx^2/2}\sqrt{2k}x\right)^k|\le Ce^{-ck}.\]
		This is the modification of (\ref{eqn:ignore-214}). Now substituting these modifications in the proof of the $\tau\neq 0$ case provides a proof in this case, finishing the proof.
	\end{proof}
	
	\appendix
	
	\section{Conjecture \ref{conjecture: moment conjecture} and Fractional Moments \label{section:appendix:conjecture}}
	
	Here we will support Conjecture \ref{conjecture: moment conjecture}, as well as show that the formula for $C_{\tau}(\ell)$ given in Conjecture \ref{conjecture: moment conjecture} coincides with the expression given when $\ell\in \N$ in Theorem \ref{theorem: moments of real bulk}. Our proof is based on asymptotics for the Selberg integral, as used by Forrester and Frankel \cite{forrester-fischer-hartwig-asymptotics} to derive their many famed conjectures on the non-integral (and non-Hermitian) asymptotics for moments of the characteristic polynomials coming from a number of different families.
	
	The first key support is that relation
	\[C_{\tau}(\ell)=\left(\frac{(1+\tau)^{\ell/2}}{(1-\tau^2)^{\ell(\ell+1)/4}}\right)C_0(\ell).\]
	As this holds for all integral $\ell$, and its expression is quite simple, it is quite reasonable to conjecture that it will hold for all non-integral values of $\ell$. Assuming as well that there is a limiting constant which remains independent of the choice of point in the bulk, one would only need to show this in the case of $\tau=0$ and $x=0$. However, Conjecture \ref{conjecture: moment conjecture} is easily obtained in this case, as we now show.
	
	For this we note that the expression used to study the integral moments above (i.e. \ref{eqn:determinant of ginibre ratio}) is still valid for non-integral powers. In particular, as long as $\ell>-1$ and $N$ is even we have that
	\[\E|\det(A_{N,0})|^\ell=N^{-N\ell/2}2^{N\ell/2}\prod_{i=1}^{N}\frac{\Gamma((\ell+i)/2)}{\Gamma(i/2)}=\]
	\[\left(\frac{N}{2}\right)^{-N\ell/2}\left(\prod_{i=1}^{N/2}\frac{\Gamma(\ell/2-1/2+i)}{\Gamma(-1/2+i)}\right)\left(\prod_{i=1}^{N/2}\frac{\Gamma(\ell/2+i)}{\Gamma(i)}\right)\label{eqn:ignore-2347889234}\]
	Product of this type can be understood using the recurrence formula for the $G$-function (see (24) of \cite{forrester-constant}) so that we may rewrite this product as
	\[\left(\frac{N}{2}\right)^{-N\ell/2}\left(\frac{G(N/2+\ell/2+1/2)G(1/2)}{G(\ell/2+1/2)G(N/2+1/2)}\right)\left(\frac{G(N/2+\ell/2+1)}{G(\ell/2+1)G(N/2+1)}\right).\]
	Employing the asymptotic formula
	\[\frac{G(N+a+1)}{G(N+b+1)}=\left(\frac{e^{N}}{N(2\pi)^{1/2}}\right)^{b-a}N^{\left(a^2-b^2\right)/2}(1+o(1)).\]
	Applying this repeatedly we obtain that (\ref{eqn:ignore-2347889234}) is
	\[e^{-\frac{N\ell}{2}}(2\pi)^{\ell/2}\left(\frac{N}{2}\right)^{\ell(\ell-1)/4}\left(\frac{G(1/2)}{G(\ell/2+1/2)G(\ell/2+1)}\right)(1+o(1)),\]
	which gives Conjecture \ref{conjecture: moment conjecture} in the case of $x=0$, $\tau=0$ and $N$ even.  A similar proof establishes the case of $N$ odd. In particular, this confirms that for integral $\ell$ we have that
	\[C_0(\ell)=\frac{(2\pi)^{\ell/2}G(1/2)}{2^{\ell(\ell-1)/4}G(\ell/2+1/2)G(\ell/2+1)},\]
	as claimed.
	\bibliographystyle{abbrv}
	\bibliography{main}
\end{document}